\newtheorem{theorem}{Theorem}[section]
\theoremstyle{definition}
\theoremstyle{remark}
\newtheorem{remark}[theorem]{Remark}
\theoremstyle{assumption}
\numberwithin{equation}{section}
\theoremstyle{problem}
\begin{document}

\title[Adaptive augmented regularization method]{An adaptive augmented regularization method and its applications}

\author[J. Jia]{Junxiong Jia}
\address{School of Mathematics and Statistics,
Xi'an Jiaotong University,
 Xi'an
710049, China; }
\email{jjx323@mail.xjtu.edu.cn}

\author[Q. Sun]{Qihang Sun}
\address{School of Mathematics and Statistics,
Xi'an Jiaotong University,
 Xi'an
710049, China;}
\email{tjlxsunqihang@126.com}

\author[B. Wu]{Bangyu Wu}
\address{School of Mathematics and Statistics,
Xi'an Jiaotong University,
 Xi'an
710049, China;}
\email{bangyuwu@xjtu.edu.cn}

\author[J. Peng]{Jigen Peng}
\address{School of Mathematics and Statistics,
Xi'an Jiaotong University,
 Xi'an
710049, China;}
\email{jgpen@mail.xjtu.edu.cn}


\subjclass[2010]{49N45, 65N21, 86A22}



\keywords{Spatially adaptive method, Regularization method, Inverse problem, Bayesian inverse method}

\begin{abstract}
Regularization method and Bayesian inverse method are two dominating ways for solving inverse problems generated from various
fields, e.g., seismic exploration and medical imaging.
The two methods are related with each other by the MAP estimates of posterior probability distributions.
Considering this connection, we construct a prior probability distribution with several hyper-parameters and provide the relevant Bayes' formula, then
we propose a corresponding adaptive augmented regularization model (AARM).
According to the measured data, the proposed AARM can adjust its form to various regularization models at each discrete point of the estimated function,
which makes the characterization of local smooth properties of the estimated function possible.
By proposing a modified Bregman iterative algorithm, we construct an alternate iterative algorithm to solve the AARM efficiently.
In the end, we provide some numerical examples which clearly indicate that the proposed AARM can generates a favorable result for some examples
compared with several Tikhonov and Total-Variation regularization models.
\end{abstract}

\maketitle


\section{Introduction}

Consider the following abstract formulation for noisy indirect observations of a function $f$,
\begin{align}\label{1AbstractForm}
d = \mathcal{F}(f) + \epsilon,
\end{align}
where $f$ is a function in some Banach space $X$, $d \in \mathbb{R}^{m}$ represents the measurement data, $\epsilon \in \mathbb{R}^{m}$ stands for the measurement noise
and $\mathcal{F}\, : \, X \rightarrow \mathbb{R}^{m}$ represents some forward map, e.g., convolution operator, acoustic wave equation and diffusion equation.
The inverse problem is to estimate $f$ from the noisy data $d$ which include many types of problems such as deblurring \cite{Kindermann2005Deblurring},
inverse source problem \cite{Bao2013RandomSource} and full waveform inversion \cite{M2015Full}.

There are two main methods for solving inverse problems: one is the regularization method, another one is the Bayesian inverse method.
The two methods are closely linked with each other by the maximum a posteriori (MAP) estimate of the posterior probability distribution.
Especially, some types of Tikhonov regularization model can be seen as the MAP estimate in the Bayesian inverse framework
with Gaussian prior and Gaussian noise assumptions \cite{MAP_detail,MAP_Besov,Tarantola2005book}.
In this paper, we will propose a novel regularization model which is enlightened by the Bayesian inverse method.
In order to state the motivations clearly, let us recall some important aspects of the regularization method in the following.

Tikhonov regularization is one of the most popular methods for solving inverse problems, which formulate inverse problems
as minimization problems with residual term and regularization term \cite{Engl2000Regularization}.
For the reader's convenience, we list two specific models of the general Tikhonov regularization model as follows
\begin{align}\label{1TikhonovModel1}
\min_{f} \left\{ \|d - \mathcal{F}(f)\|_{2}^{2} + \lambda \|\nabla^{2} f\|_{2}^{2} \right\},
\end{align}
\begin{align}\label{1TikhonovModel2}
\min_{f} \left\{ \|d - \mathcal{F}(f)\|_{2}^{2} + \lambda \|f\|_{2}^{2} \right\},
\end{align}
where $\|\cdot\|_{2}$ denotes $L^{2}$ norm for functions and represents $\ell^{2}$ norm for vectors and $\lambda$ is a given constant.
Model (\ref{1TikhonovModel1}) and (\ref{1TikhonovModel2}) will be used in the following statements.
There are already numerous algorithms for solving
Tikhonov regularization models, however, it always over smoothing discontinuous parts of the estimated function $f$ \cite{Osher2005An}.
For a recent progress, Calvetti et al. \cite{Calvetti2014Variable} propose a new type of Tikhonov regularization model
based on Bayesian inverse framework, which can capture the highly oscillation parts of a function.

In order to overcome the drawbacks of Tikhonov regularization method, Total-Variation (TV) regularization has been proposed
by Rudin et al. in \cite{RUDIN1992259} for the problems of image denoising.
We also provide the TV regularization model used in this paper as follows
\begin{align}\label{1TVModel}
\min_{f} \left\{ \|d - \mathcal{F}(f)\|_{2}^{2} + \lambda \|f\|_{TV} \right\},
\end{align}
where $\|\cdot\|_{TV}$ represents Total-Variation norm and $\lambda$ is a given constant.
This model can capture the discontinuous parts of a function, however, it will lead to staircasing effect which
means that this model tends to find a piecewise-constant function \cite{LI2010870}.
When the original function is a smooth and slowly changed function, staircasing effect will make the recovered function unacceptable.

A natural question is how to construct a new regularization model that has fine performance on different parts of a function.
Specifically speaking, for a function shown in Figure \ref{ComplexSignal},
we need the regularization model generates a similar result as the TV regularization model for the blue part (solid line).
For the green part (dashed line), we would like the new regularization model performs similar to the Tikhonov regularization model (\ref{1TikhonovModel1}).
At last, we expect that the new model generates a similar result as the Tikhonov regularization model (\ref{1TikhonovModel2}) for the red part (dash-dotted line).
\begin{figure}[htb]
  \centering
  \includegraphics[width = 0.65\textwidth]{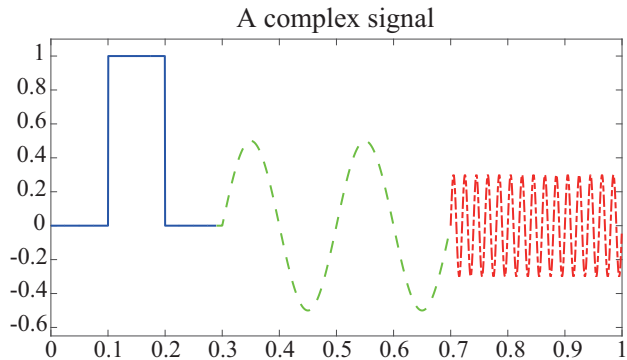}\\
  \caption{A signal with discontinuous part (blue, solid line), continuous and slowly changing part (green, dashed line),
  continuous and fast changing part (red, dash-dotted line).
  }\label{ComplexSignal}
\end{figure}
In order to attain this goal, we need the new model to alter its behavior between different regularization models on each particular region.
There are already many investigations in this direction. The basic variable exponent type regularization model has the following form
\begin{align}\label{1PTVModel}
\min_{f} \left\{ \|d - \mathcal{F}(f)\|_{2}^{2} + \lambda \int_{\Omega}|\nabla f(x)|^{p(|\nabla f|)}dx \right\},
\end{align}
where $p(\cdot)$ is a scalar function which tends to $1$ if $|\nabla f|$ goes to $\infty$, and tends to $2$ if $|\nabla f|$ goes to $0$.
For investigations on this model and its various variations, we refer to \cite{Blomgren1999Variational,HARJULEHTO201356,HARJULEHTO2008174,LI2010870}.
However, there are two main problems for the regularization model (\ref{1PTVModel}):
\begin{enumerate}
  \item How to specify the parameter $\lambda$ which balance the effects of the residual term and the regularization term;
  \item How to design a scalar function $p(\cdot)$ to adjust the regularization term efficiently.
\end{enumerate}
For the parameter $\lambda$, it can be determined by using Morozov's discrepancy principle \cite{DisBayesian1}.
However, no matter which value of $\lambda$ to be chosen, $\lambda$ balance the residual term
and the regularization term in the whole domain of $f$.
Hence, it can not balance the two terms in some local regions. The blue part, the green part and the red part in Figure \ref{ComplexSignal}
obviously need different values of $\lambda$ to obtain optimal estimations.
For the scalar function $p(\cdot)$, it depends on an unknown value $|\nabla f|$ which usually can only be estimated roughly.
The dependence of $p(\cdot)$ on $|\nabla f|$ also leads difficulties for designing efficient iterative algorithms.
From the perspective of Bayesian inverse method, a variable-order Besov prior probability measure has been constructed in \cite{Junxiong2016variable}
to achieve a similar aim as model (\ref{1PTVModel}).
However, no practical algorithms have been proposed, which is also one of the motivations for our work.

In this paper, enlightened by the Bayesian inverse method, we propose an adaptive augmented regularization model (AARM) to
overcome the aforementioned two difficulties for one-dimensional functions.
The Bayesian inverse method has also been employed by Jin and Zou \cite{Jin2009IP,Jin2010JCP} for developing an augmented Tikhonov regularization method which can determine
regularization parameters from data.
Now, let us provide a short explanation of our main idea.
Firstly, we will construct our model through Bayesian inverse framework,
then, we propose the corresponding regularization model by the MAP estimate of the posterior probability distribution.
In this procedure, the key point is to construct an appropriate prior probability distribution which can
generate functions similar to the function shown in Figure \ref{ComplexSignal}.
Autoregressive Markov models are employed to construct the prior probability distribution.
Different to the classical autoregressive Markov models, a vector of hyper-parameters $\theta$ are employed
to integrate two autoregressive Markov models with different smooth levels.
So, the parameter $\theta$ adjust the smooth levels of regularization terms, and the value of each component of $\theta$
reflects the smoothness of the estimated function $f$ in each discrete point.
Relying on $\theta$, a mechanism has been designed to alternate the model
between TV and Tikhonov regularization models in different regions of the estimated function $f$, which solves Problem (2) mentioned in the previous paragraph.
Because we consider the problem under Bayesian inverse framework, only probability distributions of hyper-parameters $\theta$ and $\gamma$
are specified. Therefore, the hyper-parameters have the ability of adjusting its values in each discrete point according to the measured data,
which solve Problem (1) mentioned in the previous paragraph.

The organization of this paper is as follows. In Section 2, we construct a spatially adaptive prior probability distribution based
on autoregressive Markov models with different smooth levels. In the construction, several hyper-parameters have been introduced
and a mechanism has been designed to alternate between TV and Tikhonov regularization models.
In Section 3, through the MAP estimate of posterior probability distribution, an adaptive augmented regularization model (AARM) has been proposed.
Then, we design an alternate iterative algorithm to solve the proposed AARM.
In each alternate iterative process, the first two sub-optimization problems have been solved based on a modified Bregman iterative algorithm and
the third sub-optimization problem can be solved simply by sufficient conditions of optimization points.
At the end of this section, some preliminary theoretical investigations are provided.
In Section 4, we show some numerical results of deconvolution problems obtained by using different methods,
which illustrate the effectiveness of the proposed AARM.
In Section 5, we summarize the main points and provide some further problems.

\section{Inverse problems in Bayesian statistical framework}\label{2sectionTitle}

In this section, we assume $d \in \mathbb{R}^{m}$. Given two real numbers $a$ and $b$, let $S_{n}$ stands for a sample operator which is defined as follows
\begin{align}\label{2SampleOpe}
(S_{n}f) = (f(x_{0}), f(x_{1}), \ldots, f(x_{n})), \quad \text{where} \,\,  x_{j} = a + j\Delta x, \,\, j = 0,1,\ldots,n,
\end{align}
where $x_{j} \in [a,b]$ and $\Delta x = \frac{b-a}{n}$.
Denote $f_{j} := f(x_{j})$ with $j = 0,1,\ldots,n$, $f_{d} := (S_{n}f) \in \mathbb{R}^{n+1}$, then (\ref{1AbstractForm}) can be reformulated as follows
\begin{align}\label{2AbstractForm}
d = \mathcal{F}_{mn}(f_{d}) + \epsilon,
\end{align}
where $\mathcal{F}_{mn}$ stands for the discretized version of the forward operator and $\epsilon \in \mathbb{R}^{m}$ represents some random noise.

In the following, we denote $\|\cdot\|_{2}$ as $L^{2}$ norm for functions and $\ell^{2}$ norm for vectors.
Similarly, $\|\cdot\|_{1}$ denotes $L^{1}$ norm for functions and $\ell^{1}$ norm for vectors.

\subsection{Spatially adaptive prior model}

Denote $\mathcal{D}_{s}$ to be a one-dimensional standard Gaussian distribution $\textbf{Gaussian}(0,1)$ when $s=1$
and a Laplace distribution $\textbf{Laplace}(0,2)$ with location paremeter $0$ and diversity parameter $2$ when $s=2$.
For constructions of the prior probability distribution, a frequently used method consists of autoregressive Markov (AR) models of the form
\begin{align}\label{21Autoregressive}
f_{j}  = \sum_{k = 1}^{p} \alpha_{k} f_{j-k} + \sqrt{\gamma_{j}}W_{j}, \quad W_{j} \sim \mathcal{D}_{s}, \quad 0\leq j\leq n,
\end{align}
where the order $p$ and the coefficient $\alpha_{k}$ are given,
and $\gamma_{j}$ is the variance of the $j$th random variable $W_{j}$.
In order to have an explicit definition, we should specify the values of $f_{j}$ for $j < 0$ in formula (\ref{21Autoregressive})
which can be given according to the requirements of a concrete problem.
Here, for simplicity, we assume
\begin{align*}
f_{j} = 0 \text{ almost certainly for } j < 0.
\end{align*}
In this case, the main idea can be shown clearly and it may be easily extended to a generalized version according to the requirements of some specific problems.

In model (\ref{21Autoregressive}), the parameter $p$ reflects the regularity properties of functions in an intuitive sense.
For two parameters $p,q$ with $p > q$, we consider two AR models,
\begin{align}\label{21twoModels1}
f_{j} & = \sum_{k = 1}^{p} \alpha_{k}f_{j-k} + \sqrt{\gamma_{j}^{(1)}}W_{j}^{(1)}, \quad W_{j}^{(1)} \sim \mathcal{D}_{s},
\end{align}
\begin{align}\label{21twoModels2}
f_{j} & = \sum_{k = 1}^{q} \beta_{k}f_{j-k} + \sqrt{\gamma_{j}^{(2)}}W_{j}^{(2)}, \quad W_{j}^{(2)} \sim \mathcal{D}_{s},
\end{align}
$j = 0,1,\ldots,n$.
Through some simple calculations, we could introduce two matrixes $L_{p}$ and $L_{q}$ to reformulate (\ref{21twoModels1}) and (\ref{21twoModels2}) as follows
\begin{align}\label{21matrixTwo}
L_{p}f_{d} = D_{\gamma^{(1)}}^{1/2}W^{(1)}, \quad L_{q}f_{d} = D_{\gamma^{(2)}}^{1/2}W^{(2)},
\end{align}
where
\begin{align*}
D_{\gamma^{(j)}} =
\left(
  \begin{array}{ccc}
    \gamma_{0}^{(j)} & \cdots & 0 \\
    \vdots & \ddots & \vdots \\
    0 & \cdots & \gamma_{n}^{(j)} \\
  \end{array}
\right), \quad W^{(j)} = (W_{0}^{(j)},\ldots,W_{n}^{(j)})^{T},\quad \text{with  } j = 1,2.
\end{align*}
Here and in the following, for a diagonal matrix $A = \text{diag}(a_{0}, a_{1}, \ldots, a_{n})$, the notation $A^{\alpha}$ means
$\text{diag}(a_{0}^{\alpha}, a_{1}^{\alpha}, \ldots, a_{n}^{\alpha})$ for $\alpha \in \mathbb{R}$.

With these preparations, a weighted variable order autoregressive model could be proposed thorough similar ideas shown in \cite{Calvetti2014Variable}.
Now, introducing a sequence of parameters $\theta_{j} \in [0,1]$ with $j = 0,1,\ldots,n$, we can construct
\begin{align}\label{21combineModel}
f_{j} = \sum_{k = 1}^{p}\left( (1-\theta_{j})\alpha_{k} + \theta_{j}\beta_{k} \right)f_{j-k} + \sqrt{\gamma_{j}}W_{j}, \quad W_{j} \sim \mathcal{D}_{s},
\end{align}
where
\begin{align*}
\gamma_{j} = (1-\theta_{j})^{2}\gamma_{j}^{(1)} + \theta_{j}^{2}\gamma_{j}^{(2)},
\end{align*}
and $\beta_{k} = 0$ for $k > q$.
Define
\begin{align}\label{21combineMatrix}
L_{\theta} := (1-D_{\theta})L_{p} + D_{\theta}L_{q}
\end{align}
with
\begin{align*}
D_{\theta} =
\left(
  \begin{array}{ccc}
    \theta_{0} & \cdots & 0 \\
    \vdots & \ddots & \vdots \\
    0 & \cdots & \theta_{n} \\
  \end{array}
\right).
\end{align*}
Now, the weighted variable order autoregressive model (\ref{21combineModel}) can be written as
\begin{align}\label{21comSimpleModel}
L_{\theta}f_{d} = D_{\gamma}^{1/2}W
\end{align}
with $W = (W_{0}, W_{1}, \ldots, W_{n})^{T}$.


Compared with the Gaussian distribution, the Laplace distribution is a heavy-tailed distribution which may more suitable
for characterizing discontinuous properties of functions.
Based on different properties of the Gaussian and the Laplace distributions, we need to model $W_{j}$ as a standard Gaussian random variable
if $f_{j}$ depends smoothly on $f_{j-1},\ldots,f_{j-p}$ that is to say the function $f$ or the derivative of the function $f$ or other
calculations of the function $f$ vary smoothly.
Otherwise, we need to model $W_{j}$ as a Laplace random variable to represent discontinuous changes of some calculations
(e.g., first-order derivative, second-order derivative) of the function $f$.
We find that the hyper-parameter $\theta$ in formula (\ref{21comSimpleModel}) could provide us an efficient way to distinguish
continuous and discontinuous parts of the function $f$.
Specifically speaking, we introduce a parameter $p^{\theta} = (p_{0}^{\theta}, p_{1}^{\theta}, \ldots, p_{n}^{\theta})$ defined as follows
\begin{align}\label{21defineP}
p_{j}^{\theta} =
\left\{ \begin{aligned}
& \, 1,\quad |\theta_{j} - \theta_{j-1}| > T_{s}, \\
& \, 2, \quad |\theta_{j} - \theta_{j-1}| \leq T_{s},
\end{aligned}
\right.
\end{align}
where $\theta_{-1} := 0$ and $T_{s}$ is a threshold value. For simplicity, we provide an intuitive way to determine $T_{s}$ as follows
\begin{align}\label{21determineTs}
T_{s} := \text{max}\left( \text{min}(M\cdot\text{mean}(|\theta_{d}|), \, r\cdot\text{max}(|\theta_{d}|)) , \, m\cdot\text{mean}(|\theta_{d}|) \right),
\end{align}
where $M$, $m$ and $r$ are three real numbers with $0 < m < M < \infty$ and $1/2 < r < 1$ and
\begin{align*}
& |\theta_{d}| := (|\theta_{1} - \theta_{0}|, |\theta_{2} - \theta_{1}|, \ldots, |\theta_{n} - \theta_{n-1}|), \\
& \quad\quad\,\,\,  \text{mean}(|\theta_{d}|) := \frac{1}{n}\sum_{k = 1}^{n}|\theta_{j} - \theta_{j-1}|.
\end{align*}
With (\ref{21defineP}) and (\ref{21determineTs}), we can determine $W_{j}\, (j = 0,1,\ldots,n)$ as follows
\begin{align}\label{21disWj}
W_{j} \sim
\left\{ \begin{aligned}
& \, \textbf{Gaussian}(0,1), \quad p_{j}^{\theta} = 2, \\
& \, \textbf{Laplace}(0,2), \,\,\,\, \quad p_{j}^{\theta} = 1.
\end{aligned}\right.
\end{align}
\begin{remark}\label{21explain1}
$|\theta_{d}|$ is a vector reflects the changes between two autoregressive models with parameters $p$ and $q$ ($p > q$) respectively.
The two autoregressive Markov models represent our different beliefs on the smoothness of the function $f$, so the value of each component of the vector $|\theta_{d}|$
reflects the changes of smoothness of the function $f$.
Large changes of the smoothness of the function $f$ imply that we need to use the Laplace random variables.
On the contrary, small changes indicate that the Gaussian random variables are an appropriate choice.
Formula (\ref{21disWj}) just reflects these considerations.
\end{remark}

At this stage, we can easily find that
\begin{align}\label{21modelAdd}
D_{\gamma}^{-1/2}L_{\theta}f_{d} = W,
\end{align}
and
\begin{align}\label{21modelDistribution}
\pi_{\text{prior}}(f_{d} \, | \, \theta, \gamma) \propto \text{Det}\left( D_{\gamma}^{-1/2}L_{\theta} \right)
\exp\left( - \frac{1}{2}\|D_{\gamma}^{-1/2}L_{\theta}f_{d}\|_{p^{\theta}}^{p^{\theta}} \right),
\end{align}
where
\begin{align}\label{21definePtheta}
\|D_{\gamma}^{-1/2}L_{\theta}f_{d}\|_{p^{\theta}}^{p^{\theta}} := \sum_{k = 0}^{n} |D_{\gamma}^{-1/2}L_{\theta}f_{d}|^{p_{j}^{\theta}}.
\end{align}
Since
\begin{align*}
\text{Det}(D_{\gamma}^{-1/2}L_{\theta}) = \text{Det}(D_{\gamma}^{-1/2}) \text{Det}(L_{\theta}) = \prod_{j = 0}^{n} \gamma_{j}^{-1/2},
\end{align*}
we can reduce formula (\ref{21modelDistribution}) as follows
\begin{align}\label{21modelDistriFinal}
\pi_{\text{prior}}(f_{d} \, | \, \theta, \gamma) \propto
\exp\left( - \frac{1}{2}\|D_{\gamma}^{-1/2}L_{\theta}f_{d}\|_{p^{\theta}}^{p^{\theta}} - \frac{1}{2}\sum_{k = 0}^{n}\text{log}\gamma_{j} \right).
\end{align}

In order to form a useful prior model, we should specify the statistical properties of the hyper-parameters $\gamma$ and $\theta$ appearing in
formula (\ref{21modelDistriFinal}).
Each component of the hyper-parameter $\gamma = (\gamma_{0}, \ldots, \gamma_{n})$ stands for the variance of each component of the random vector
$W_{\gamma} := (\sqrt{\gamma_{0}}W_{0}, \ldots, \sqrt{\gamma_{n}}W_{n})$.
Our beliefs on the random vector $W_{\gamma}$ determine the statistical properties of $\gamma$.

\textbf{Belief 1 of $\gamma$}: Our model (\ref{21modelDistriFinal}) describe the function $f$ quite well except at a few places where it may have
relatively large jumps.

In order to reflect this belief, we can assume that each component of $f_{d}$ are independent, and use the family of generalized gamma distributions
$\textbf{GenGamma}(r,\beta,\bar{\gamma})$ \cite{Calvetti2014Variable,Shin2005Statistical}, defined as
\begin{align}\label{21beliefgamma1}
\begin{split}
\pi_{\text{hyper},1}(\gamma) \propto \exp\left( -\sum_{j = 0}^{n}\left( \frac{\gamma_{j}}{\bar{\gamma}} \right)^{r} + (r\beta - 1)\sum_{j=0}^{n}\text{log}\gamma_{j} \right),
\end{split}
\end{align}
where $r, \beta, \bar{\gamma}$ are specified appropriately.

\textbf{Belief 2 of $\gamma$}: Our model (\ref{21modelDistriFinal}) describe the function $f$ quite well at all points.

With this belief, we also assume that each component of $f_{d}$ are independent, and employ the family of Gaussian distributions
$\textbf{Gaussian}(0,\eta)$ to define
\begin{align}\label{21beliefgamma2}
\begin{split}
\pi_{\text{hyper},1}(\gamma) \propto \exp\left( - \frac{1}{2\eta}\sum_{j = 0}^{n}\gamma_{j}^{2} \right)\mathbbm{1}_{[0,\infty)^{n+1}}(\gamma)
\end{split}
\end{align}
with appropriately specified $\eta$.

Similarly, statistical properties of the parameter $\theta$ depend on our beliefs of the changes of functions.

\textbf{Belief 1 of $\theta$}: The variations between the two autoregressive models are slow.

Define the discrete finite difference operator as follows
\begin{align}\label{21defineL1}
L_{1} =
\left(
  \begin{array}{cccc}
    1 & 0 & \cdots & 0 \\
    -1 & 1 & \cdots & 0 \\
    \vdots & \vdots & \ddots & \vdots \\
    0 & \cdots & -1 & 1 \\
  \end{array}
\right),
\end{align}
then we can use the following model
\begin{align}\label{21beliefTheta1}
\pi_{\text{hyper},2}(\theta) \propto \exp\left( -\frac{1}{2\eta}\|L_{1}\theta\|_{2}^{2} \right)\mathbbm{1}_{[0,1]^{n+1}}(\theta),
\end{align}
where $\mathbbm{1}_{[0,1]^{n+1}}(\cdot)$ is the characteristic function of the hypercube in $\mathbb{R}^{n+1}$
and $\eta$ is given.

\textbf{Belief 2 of $\theta$}: The variations between the two autoregressive models are slow, however, at some points the changes may be large.

With this belief, we can instead the Gaussian distribution by Laplace distribution to use the following model
\begin{align}\label{21beliefTheta2}
\pi_{\text{hyper},2}(\theta) \propto \exp\left( -\frac{1}{2\eta}\|L_{1}\theta\|_{1} \right)\mathbbm{1}_{[0,1]^{n+1}}(\theta),
\end{align}
where $\eta$ is given.

Finally, we propose a \emph{spatially adaptive prior probability distribution} has the following density function
\begin{align}\label{21fullModel}
\pi_{\text{p}}(f_{d}, \gamma, \theta) = \pi_{\text{prior}}(f_{d} \, | \, \theta, \gamma)\pi_{\text{prior},1}(\gamma)\pi_{\text{hyper},2}(\theta).
\end{align}
\begin{remark}\label{21fullModelRemark}
In the prior probability density function (\ref{21fullModel}), we only specify the statistical properties of the hyper-parameters $\gamma$ and $\theta$.
The concrete values of $\gamma$ and $\theta$ will be figured out by incorporating the information of the measured data
which is an advantage for considering inverse problem under Bayesian statistical framework.
In other words, we construct a spatially adaptive prior probability distribution which can
capture the local smoothness of the function $f$ by adjusting the hyper-parameters through some noisy data.
\end{remark}

\subsection{Likelihood and Bayes' formula}

For the additive noise model (\ref{2AbstractForm}), we assume that the noise $\epsilon$ is a realization of
a Gaussian random variable $E$,
\begin{align*}
E \sim \textbf{Gaussian}(0,\Sigma).
\end{align*}
For simplicity, we denote $S$ as the Cholesky factor of the inverse of the covariance,
\begin{align*}
\Sigma^{-1} = S^{T}S.
\end{align*}
Hence, the likelihood density is given by
\begin{align*}
\pi_{\ell}(d \, | \, f_{d}) \propto \exp\left( -\frac{1}{2}\|S(d-\mathcal{F}_{mn}(f_{d}))\|_{2}^{2} \right).
\end{align*}
Through Bayes' formula, the posterior density has the following form
\begin{align}\label{22Posterior}
\pi_{\text{post}}(f_{d},\theta,\gamma \, | \, d) \propto \pi_{\ell}(d \, | \, f_{d})\pi_{\text{prior}}(f_{d} \, | \, \theta,\gamma)
\pi_{\text{hyper},1}(\gamma)\pi_{\text{hyper},2}(\theta).
\end{align}
By formula (\ref{22Posterior}), we can employ algorithms like Markov Chain Monte Carlo (MCMC) to explore the posterior probability distribution
to obtain full information of the function $f$.
However, in some applications, the computational load of the forward problem is too high to make enough samples.
Hence, alternatively, we can solve the following minimization problem
\begin{align}\label{22Map}
\min_{f_{d}, \theta, \gamma} \pi_{\ell}(d \, | \, f_{d})\pi_{\text{prior}}(f_{d} \, | \, \theta,\gamma)
\pi_{\text{hyper},1}(\gamma)\pi_{\text{hyper},2}(\theta),
\end{align}
to obtain the maximum a posterior estimator (MAP estimator) which connects the Bayesian inverse method and the regularization method.

\section{Adaptive augmented regularization method}\label{MAPSection}

In Section \ref{2sectionTitle}, we consider the inverse problem under the Bayesian statistical framework
and propose a spatially adaptive prior probability distribution.
Now, considering the forward map in real applications usually computational intensive, we only consider the MAP estimator in this section.
Based on the MAP estimator, we propose the following \emph{adaptive augmented regularization model} (AARM)
\begin{align}\label{3DDARmodel}
\begin{split}
\min_{f_{d}, \gamma, \theta} & \Bigg\{ \frac{1}{2}\|S(d - \mathcal{F}_{mn}(f_{d}))\|_{2}^{2}
+ \frac{1}{2}\|D_{\gamma}^{-1/2}L_{\theta}f_{d}\|_{p^{\theta}}^{p^{\theta}} + \frac{1}{2}\sum_{k = 0}^{n}\text{log}\gamma_{j} \\
& \quad\quad\quad\quad\quad
- \log(c_{1}\pi_{\text{hyper},1}(\gamma)) - \log(c_{2}\pi_{\text{hyper},2}(\theta)) \Bigg\},
\end{split}
\end{align}
where $c_{1}$ and $c_{2}$ are normalization constants of probability distributions of $\gamma$ and $\theta$.
If we choose formula (\ref{21beliefgamma1}) and formula (\ref{21beliefTheta1}) as the prior assumptions of $\gamma$ and $\theta$,
we can specify model (\ref{3DDARmodel}) as follows
\begin{align}\label{3DDARmodel1}
\begin{split}
\min_{f_{d}, \gamma, \theta\in [0,1]^{n+1}} & \Bigg\{\frac{1}{2}\|S(d - \mathcal{F}_{mn}(f_{d}))\|_{2}^{2}
+ \frac{1}{2}\|D_{\gamma}^{-1/2}L_{\theta}f_{d}\|_{p^{\theta}}^{p^{\theta}} + \frac{1}{2}\sum_{k = 0}^{n}\text{log}\gamma_{j} \\
& \quad\quad
+ \sum_{j = 0}^{n}\left( \frac{\gamma_{j}}{\bar{\gamma}} \right)^{r} - (r\beta - 1)\sum_{j=0}^{n}\text{log}\gamma_{j} + \frac{1}{2\eta}\|L_{1}\theta\|_{2}^{2}\Bigg\}.
\end{split}
\end{align}
In the following, we will develop an algorithm to solve the AARM (\ref{3DDARmodel1}) and provide some preliminary theoretical analysis.
For other assumptions on statistical properties of $\gamma$ and $\theta$ shown in the previous section, algorithms can be developed
similarly, so the details for other cases are omited.

By introducing
\begin{align}\label{3zong}
\begin{split}
T(f_{d},\gamma,\theta) := & \Bigg\{\frac{1}{2}\|S(d - \mathcal{F}_{mn}(f_{d}))\|_{2}^{2}
+ \frac{1}{2}\|D_{\gamma}^{-1/2}L_{\theta}f_{d}\|_{p^{\theta}}^{p^{\theta}} + \frac{1}{2}\sum_{k = 0}^{n}\text{log}\gamma_{j} \\
& \quad\quad
+ \sum_{j = 0}^{n}\left( \frac{\gamma_{j}}{\bar{\gamma}} \right)^{r} - (r\beta - 1)\sum_{j=0}^{n}\text{log}\gamma_{j} + \frac{1}{2\eta}\|L_{1}\theta\|_{2}^{2}\Bigg\},
\end{split}
\end{align}
problem (\ref{3DDARmodel1}) could be written compactly as follows
\begin{align}\label{3DDARmodel2}
\begin{split}
\min_{f_{d}, \gamma, \theta \in [0,1]^{n+1}} T(f_{d}, \gamma, \theta).
\end{split}
\end{align}
For convenience, we introduce the following frequently used notations
\begin{align}\label{3fn}
T_{f}(f_{d},\gamma,\theta) := \frac{1}{2}\|S(d - \mathcal{F}_{mn}(f_{d}))\|_{2}^{2}
+ \frac{1}{2}\|D_{\gamma}^{-1/2}L_{\theta}f_{d}\|_{p^{\theta}}^{p^{\theta}},
\end{align}
\begin{align}\label{3theta}
\begin{split}
T_{\theta}(f_{d},\gamma,\theta) := \frac{1}{2}\|D_{\gamma}^{-1/2}L_{\theta}f_{d}\|_{p^{\theta}}^{p^{\theta}}
+ \frac{1}{2\eta}\|L_{1}\theta\|_{2}^{2},
\end{split}
\end{align}
and
\begin{align}\label{3gamma}
\begin{split}
T_{\gamma}(f_{d},\gamma,\theta) := & \frac{1}{2}\|D_{\gamma}^{-1/2}L_{\theta}f_{d}\|_{p^{\theta}}^{p^{\theta}}
+ \sum_{j = 0}^{n}\left( \frac{\gamma_{j}}{\bar{\gamma}} \right)^{r} - \left(r\beta - \frac{3}{2}\right)\sum_{j=0}^{n}\text{log}\gamma_{j}.
\end{split}
\end{align}

\subsection{Minimization algorithm}

In order to solve the minimization problem (\ref{3DDARmodel2}), we can use the idea of alternate iteration which are shown in Algorithm \ref{alg:A}.
To make the presentation clearly, the stopping criterion and every step of minimization will be discussed separately.

\begin{algorithm}[htb]
\caption{Alternate iterative algorithm}
\label{alg:A}
\begin{algorithmic}
\STATE {(1) Set $k = 0$, $f_{d} = f_{d}^{0}, \gamma = \gamma^{0}, \theta = \theta^{0}$.}
\STATE {(2) Update $(f_{d}^{k}, \gamma^{k}, \theta^{k}) \rightarrow (f_{d}^{k+1}, \gamma^{k+1}, \theta^{k+1})$,}
\STATE {$\quad\,\,\,$ (a) $f_{d}^{k+1} := \text{argmin}_{f_{d}}T_{f_{d}}(f_{d}, \gamma^{k}, \theta^{k})$,}
\STATE {$\quad\,\,\,$ (b) $\theta^{k+1} := \text{argmin}_{\theta}T_{\theta}(f_{d}^{k+1}, \gamma^{k}, \theta)$,}
\STATE {$\quad\,\,\,$ (c) $\gamma^{k+1} := \text{argmin}_{\gamma}T_{\gamma}(f_{d}^{k+1}, \gamma, \theta^{k+1})$.}
\STATE {(3) If convergence criterion is met, stop, else, $k \leftarrow k + 1$ and continue from (2).}
\end{algorithmic}
\end{algorithm}

\subsubsection{Minimization problem for $f_{d}$}\label{subsection311}

The following minimization problem
\begin{align}\label{311miniPro}
\min_{f_{d}} T_{f_{d}}(f_{d}, \gamma^{k}, \theta^{k}),
\end{align}
is an optimization problem with mixed $\ell^{1}$, $\ell^{2}$ regularization terms, which
can not be solved directly by some classical algorithms, e.g., Bregman iterative algorithm \cite{Goldstein2009The}.
Here, we propose a modified Bregman iterative algorithm to solve (\ref{311miniPro}) efficiently.
Before we show this algorithm, let us recall that the Bregman distance associated with a convex functional $F(\cdot)$ between points $f_{d1}$ and $f_{d2}$ is defined as
\begin{align}\label{BregDis1}
D_{F}^{p}(f_{d1},f_{d2}) := F(f_{d1}) - F(f_{d2}) - \langle p, f_{d1} - f_{d2} \rangle,
\end{align}
where $p \in \partial F(f_{d2}) = \left\{ w \, : \, F(f_{d1}) - F(f_{d2}) \geq \langle w, f_{d1} - f_{d2} \rangle,\, \forall\, f_{d1} \right\}$
is the sub-gradient of $F(\cdot)$ at the point $f_{d2}$.
In our problem, we take
\begin{align}\label{Fis1}
F(f_{d}) = \frac{1}{2}\|D_{\gamma^{k}}^{-1/2}L_{\theta^{k}}f_{d}\|_{p^{\theta^{k}}}^{p^{\theta^{k}}}.
\end{align}
Then our optimization problem (\ref{311miniPro}) transforms into
\begin{align}\label{youhuatemp1}
f_{d}^{*} := \mathop{\arg\min}_{f_{d}} F(f_{d}^{*}) + \langle p, f_{d} - f_{d}^{*} \rangle + D^{p}_{F}(f_{d},f_{d}^{*})
+ \frac{1}{2}\|S(d - \mathcal{F}_{mn}(f_{d}))\|_{2}^{2}.
\end{align}
The following Bregman iterative regularization
\begin{align}\label{youhuatemp2}
f^{m+1} = \mathop{\arg\min}_{f_{d}} D_{F}^{p^{m}}(f_{d},f^{m}) + \frac{1}{2}\|S(d - \mathcal{F}_{mn}(f_{d}))\|_{2}^{2}, \quad m = 0,1,2,\ldots,
\end{align}
has been employed by Osher et al. \cite{Osher2005An} to solve (\ref{youhuatemp1}) approximately.

Using the definition of Bregman distance (\ref{BregDis1}), problem (\ref{youhuatemp2}) turns into
\begin{align}\label{youhuatemp3}
f^{m+1} = \mathop{\arg\min}_{f_{d}} F(f_{d}) - F(f^{m}) - \langle p^{m}, f_{d} - f^{m} \rangle + \frac{1}{2}\|S(d - \mathcal{F}_{mn}(f_{d}))\|_{2}^{2}.
\end{align}
From some classical results or Theorem \ref{existenceUnique} proved later, we know that $f^{1}$ is well defined.
Using the optimality of $f^{m+1}$ in (\ref{youhuatemp2}), we have
\begin{align*}
0 \in & \partial F(f^{m+1}) - p^{m} + \mathcal{F}_{mn}^{*}S^{T}S(\mathcal{F}_{mn}(f^{m+1}) - d) \\
& \quad = p^{m+1} - p^{m} + \mathcal{F}_{mn}^{*}S^{T}S(\mathcal{F}_{mn}(f^{m+1}) - d).
\end{align*}
Hence, the iteration direction in the next step in fact has the representation
\begin{align}\label{gradiNext}
p^{m+1} = p^{m} - \mathcal{F}_{mn}^{*}S^{T}S(\mathcal{F}_{mn}(f^{m+1}) - d).
\end{align}
Define
\begin{align}\label{gIte}
\tilde{g}^{1} = d - \mathcal{F}_{mn}(f^{1}), \quad \tilde{g}^{m+1} = \tilde{g}^{m} + d - \mathcal{F}_{mn}(f^{m+1}), \quad m = 1,2,\ldots.
\end{align}
Relying on this expression and some simple computations, we find that
\begin{align*}
& D_{F}^{p^{m}}(f_{d},f^{m}) + \frac{1}{2}\|S(d - \mathcal{F}_{mn}(f_{d}))\|_{2}^{2}  \\
& = F(f_{d}) - F(f^{m}) + \langle p^{m}, f^{m} \rangle - \langle p^{m}, f_{d} \rangle + \frac{1}{2}\|S(d - \mathcal{F}_{mn}(f_{d}))\|_{2}^{2} \\
& = F(f_{d}) - F(f^{m}) + \langle p^{m}, f^{m} \rangle - \langle \mathcal{F}_{mn}^{*}S^{T}S\tilde{g}^{m}, f_{d} \rangle
+ \frac{1}{2}\|S(d - \mathcal{F}_{mn}(f_{d}))\|_{2}^{2}   \\
& = F(f_{d}) + \frac{1}{2}\|S(\tilde{g}^{m} + d - \mathcal{F}_{mn}(f_{d}))\|_{2}^{2} - F(f^{m}) + \langle p^{m}, f^{m} \rangle
- \langle S(\tilde{g}^{m}+d), S\tilde{g}^{m} \rangle.
\end{align*}
The above expression tells us that the optimization problem (\ref{youhuatemp2}) has same structure as that of (\ref{311miniPro}).
Hence, it is well posed by Theorem \ref{existenceUnique}, and therefore the
sequence $\{ f^{m}\, : \, m \in \mathbb{N} \}$ is well defined.
Now, we provide a recursive procedure which can solve (\ref{youhuatemp2}) numerically in Algorithm \ref{alg:Rec}.
Concerning the properties of $\{ f^{m}\,:\, m\in\mathbb{N} \}$ appeared in Algorithm \ref{alg:Rec},
we postpone to show them in Theorem \ref{wellSeq2} in Subsection \ref{theoreticalSection}.
\begin{algorithm}[htb]
\caption{Recursive procedure for $f_{d}^{k+1}$}
\label{alg:Rec}
\begin{algorithmic}
\STATE {\textbf{Set: }
\begin{align*}
& f^{0} \leftarrow f_{d}^{k} \\
& f^{1} = \mathop{\arg\min}_{f_{d}} \Bigg\{ \frac{1}{2}\|S(d - \mathcal{F}_{mn}(f_{d}))\|_{2}^{2}
+ \frac{1}{2}\|D_{\gamma^{k}}^{-1/2}L_{\theta^{k}}f_{d}\|_{p^{\theta^{k}}}^{p^{\theta^{k}}} \Bigg\}, \\
& \tilde{g}^{1} = d - \mathcal{F}_{mn}(f^{1}),
\end{align*}
}
\STATE {\textbf{Repeat: }
\begin{align*}
& f^{m+1} = \mathop{\arg\min}_{f_{d}} \Bigg\{ \frac{1}{2}\|S(\tilde{g}^{m} + d - \mathcal{F}_{mn}(f_{d}))\|_{2}^{2}
+ \frac{1}{2}\|D_{\gamma^{k}}^{-1/2}L_{\theta^{k}}f_{d}\|_{p^{\theta^{k}}}^{p^{\theta^{k}}} \Bigg\}, \\
& \tilde{g}^{m+1} = \tilde{g}^{m} + d - \mathcal{F}_{mn}(f^{m+1}),
\end{align*}
}
\STATE {\textbf{Until: } Some stopping conditions is satisfied.}
\end{algorithmic}
\end{algorithm}

For simplicity, we denote $g^{m} = \tilde{g}^{m} + d$ for $m = 0,1,\ldots.$
Then, in each iterative step of Algorithm \ref{alg:Rec}, we need to solve the following minimization problem
\begin{align}\label{311itePro}
\min_{f_{d}} \Bigg\{ \frac{1}{2}\|S(g^{m} - \mathcal{F}_{mn}(f_{d}))\|_{2}^{2}
+ \frac{1}{2}\|D_{\gamma}^{-1/2}L_{\theta}f_{d}\|_{p^{\theta}}^{p^{\theta}} \Bigg\}.
\end{align}
By introducing a variable $w := D_{\gamma^{k}}^{-1/2}L_{\theta^{k}}f_{d}$, we can rewrite problem (\ref{311itePro}) as follows
\begin{align}\label{311itePro1}
\begin{split}
& \min_{f_{d},w}\Bigg\{
\frac{1}{2}\|S(g^{m} - \mathcal{F}_{mn}(f_{d}))\|_{2}^{2}  + \frac{1}{2}\sum_{\ell \in \Omega_{1}}|w_{\ell}| + \frac{1}{2}\sum_{\ell \in \Omega_{2}}|w_{\ell}|^{2}
\Bigg\} \\
& \text{such that } w = D_{\gamma^{k}}^{-1/2}L_{\theta^{k}}f_{d},
\end{split}
\end{align}
where
\begin{align*}
\Omega_{1} := \Big\{ \ell \, | \, p_{\ell}^{\theta^{k}} = 1, \ell = 0,1,\ldots,n \Big\}, \quad
\Omega_{2} := \Big\{ \ell \, | \, p_{\ell}^{\theta^{k}} = 2, \ell = 0,1,\ldots,n \Big\}.
\end{align*}
Based on this decomposition, we define $w_{1} := w|_{\Omega_{1}}$ and $w_{2} := w|_{\Omega_{2}}$ where
\begin{align*}
\big(w|_{\Omega_{1}}\big)_{\ell} =
\left\{\begin{aligned}
& w_{\ell}, \quad \text{if }\ell \in \Omega_{1}, \\
& 0, \quad\,\,\,\, \text{if }\ell \not\in \Omega_{1},
\end{aligned}\right. \quad\quad
\big(w|_{\Omega_{2}}\big)_{\ell} =
\left\{\begin{aligned}
& w_{\ell}, \quad \text{if }\ell \in \Omega_{2}, \\
& 0, \quad\,\,\,\, \text{if }\ell \not\in \Omega_{2},
\end{aligned}\right.
\end{align*}
with $\ell = 0,1,\ldots,n$.

Now, using splitting technique, we construct an iterative procedure of alternating solving a series of easy subproblems.
The first two problems can be called ``$w$-subproblem'' for fixed $f_{d} = f_{d}^{*}$:
\begin{align}\label{311subPro1}
\mathop{\arg\min}_{w_{1}} \Bigg\{ \frac{1}{2}\sum_{\ell \in \Omega_{1}} |w_{\ell}|
+ \frac{\tilde{\lambda}}{2} \Big\| w_{1} - D_{\gamma^{k}}^{-1/2}L_{\theta^{k}}f_{d}^{*} \Big\|_{2}^{2} \Bigg\},
\end{align}
and
\begin{align}\label{311subPro2}
\mathop{\arg\min}_{w_{2}} \Bigg\{ \frac{1}{2}\sum_{\ell \in \Omega_{2}} |w_{\ell}|^{2}
+ \frac{\tilde{\lambda}}{2} \Big\| w_{2} - D_{\gamma^{k}}^{-1/2}L_{\theta^{k}}f_{d}^{*} \Big\|_{2}^{2} \Bigg\}.
\end{align}
The last subproblem is the ``$f_{d}$-subproblem'' for fixed $w = w^{*}$:
\begin{align}\label{311subPro3}
\mathop{\arg\min}_{f_{d}} \Bigg\{ \frac{1}{2} \Big\|S(g^{m} - \mathcal{F}_{mn}(f_{d})) \Big\|_{2}^{2}
+ \frac{\tilde{\lambda}}{2} \Big\| w^{*} - D_{\gamma^{k}}^{-1/2}L_{\theta^{k}}f_{d} \Big\|_{2}^{2} \Bigg\}.
\end{align}

Using some standard calculations \cite{Chong2011An}, we can easily obtain the minimizer of the subproblems (\ref{311subPro1}) and (\ref{311subPro2}) as follows
\begin{align}\label{311subSol1}
\big(w_{1}\big)_{\ell} =
\left\{\begin{aligned}
& 0, \quad \ell \in \Omega_{1}, \\
& 0, \quad \ell \not\in \Omega_{1} \text{ and }\Big|\big(D_{\gamma^{k}}^{-1/2}L_{\theta^{k}}f_{d}^{*}\big)_{\ell}\Big| \leq \frac{1}{\tilde{\lambda}}, \\
& \Big( \Big| Tf^{*}_{\ell} \Big| - \frac{1}{\tilde{\lambda}} \Big)
\frac{Tf^{*}_{\ell}}{\Big|Tf^{*}_{\ell}\Big|},  \quad \ell \not\in \Omega_{1} \text{ and } \Big|Tf^{*}_{\ell}\Big|>\frac{1}{\tilde{\lambda}},
\end{aligned}\right.
\end{align}
\begin{align}\label{311subSol2}
\big(w_{2}\big)_{\ell} =
\left\{\begin{aligned}
& 0, \quad \ell \in \Omega_{2}, \\
& \frac{\big( D_{\gamma^{k}}^{-1/2}L_{\theta^{k}}f_{d}^{*} \big)_{\ell}}{\tilde{\lambda} + 2}, \quad \ell \in \Omega_{2},
\end{aligned}\right.
\end{align}
with $Tf^{*}_{\ell} := \big(D_{\gamma^{k}}^{-1/2}L_{\theta^{k}}f_{d}^{*}\big)_{\ell}$.

For problem (\ref{311subPro3}), if the the operator $\mathcal{F}_{mn}$ is linear, that is, $\mathcal{F}_{mn} \in \mathbb{R}^{m\times n}$, the
minimization problem (\ref{311subPro3}) is the least squares solution of the linear system
\begin{align}\label{311solLinear}
\left(
  \begin{array}{c}
    S\mathcal{F}_{mn} \\
    \tilde{\lambda}^{1/2}D_{\gamma^{k}}^{-1/2}L_{\theta^{k}} \\
  \end{array}
\right)f_{d} =
\left(
  \begin{array}{c}
    Sg^{m} \\
    \tilde{\lambda}^{1/2}w^{*} \\
  \end{array}
\right),
\end{align}
where $w^{*} = w_{1}^{*} + w_{2}^{*}$. Then, the solution can be obtained by taking pesudo-inverse.
If the operator $\mathcal{F}_{mn}$ is nonlinear, the minimization problem (\ref{311subPro3}) could be seen as a standard Tikhonov regularization problem
which can be solved efficiently, e.g., using iterative solvers \cite{DisBayesian1}.
Based on these considerations, we can show our modified Bregman iterative algorithm in Algorithm \ref{alg:B}.
\begin{algorithm}[htb]
\caption{Modified Bregman iterative algorithm for $T_{f_{d}}(f_{d}, \gamma, \theta)$}
\label{alg:B}
\begin{algorithmic}
\REQUIRE {$\gamma^{k}$, $\theta^{k}$, $p^{\theta^{k}}$, $S$ (\text{Covariance matrix of the noise}), $d$ (Measured data), $N_{\text{max}}$, $\hat{N}_{\text{max}}$, $\tilde{\lambda}$}
\STATE {Set: $f^{0} = f_{d}^{k}$, $g^{0} = 0$; Calculate: $D_{\gamma^{k}}$, $L_{\theta^{k}}$}
\WHILE {$m \leq N_{\text{max}}$}
\STATE {$f^{m,0} \leftarrow f^{m}$}
\FOR {$\tilde{m} = 0$ to $\hat{N}_{\text{max}}$}
\STATE {(1) Calculate $w_{1}^{\tilde{m}+1}$ according to formula (\ref{311subSol1}) with $f_{d}^{*}$ replaced by $f^{m,\tilde{m}}$,}
\STATE {(2) Calculate $w_{2}^{\tilde{m}+1}$ according to formula (\ref{311subSol2}) with $f_{d}^{*}$ replaced by $f^{m,\tilde{m}}$,}
\STATE {(3) Calculate $f^{m,\tilde{m}+1}$ by solving linear system (\ref{311solLinear}) ($\mathcal{F}_{mn}$ is a linear operator)
or using iterative solvers for the classical Tikhonov regularization problem ($\mathcal{F}_{mn}$ is a non-linear operator),}
\ENDFOR
\STATE {$f^{m+1} \leftarrow f^{m,\hat{N}_{\text{max}}}$, $g^{m+1} \leftarrow g^{m} + d - \mathcal{F}_{mn}(f^{m+1})$,}
\ENDWHILE
\STATE {$f_{d}^{k+1} \leftarrow f^{N_{\text{max}}}$,}
\ENSURE {$f_{d}^{k+1}$}
\end{algorithmic}
\end{algorithm}

\subsubsection{Minimization problem for $\theta$}

Remembering formula (\ref{21combineMatrix}), we have
\begin{align}\label{312reduction}
\begin{split}
L_{\theta}f_{d}^{k+1} & = (1-D_{\theta})L_{p}f_{d}^{k+1} + D_{\theta}L_{q}f_{d}^{k+1} \\
& = D_{\theta}(L_{q} - L_{p})f_{d}^{k+1} + L_{p}f_{d}^{k+1}.
\end{split}
\end{align}
Let
\begin{align}\label{312notation1}
Q^{k+1} = \text{diag}\Big( (L_{q}-L_{p})f_{d}^{k+1} \Big), \quad v^{k+1} = L_{p}f_{d}^{k+1},
\end{align}
then equality (\ref{312reduction}) can be written as follows
\begin{align}\label{312redFinalForm}
L_{\theta}f_{d}^{k+1} = Q^{k+1}\theta + v^{k+1}.
\end{align}
Now, we need to solve the following problem
\begin{align}\label{312diffForm}
\min_{\theta \in [0,1]^{n+1}} \, T_{\theta}(f_{d}^{k+1}, \gamma^{k}, \theta),
\end{align}
where
\begin{align}\label{fun1}
T_{\theta}(f_{d}^{k+1}, \gamma^{k}, \theta) =
\frac{1}{2}\Big\| D_{\gamma^{k}}^{-1/2}Q^{k+1}\theta + D_{\gamma^{k}}^{-1/2}v^{k+1} \Big\|_{p^{\theta}}^{p^{\theta}}
+ \frac{1}{2\eta}\Big\| L_{1}\theta \Big\|_{2}^{2}.
\end{align}
The above function (\ref{fun1}) has similar structures as the function $T_{f_{d}}(f_{d}, \gamma^{k}, \theta^{k})$ except the constrain
$\theta \in [0,1]^{n+1}$, so we can use the modified Bregman iterative algorithm proposed in Subsection \ref{subsection311}
to solve the following problem
\begin{align}
\min_{\theta} \, T_{\theta}(f_{d}^{k+1}, \gamma^{k}, \theta).
\end{align}
Then, for $j = 0,1,\ldots,n$, we adjust $\theta_{j} = 0$ if $\theta_{j} < 0$, and $\theta_{j} = 1$ if $\theta_{j} > 1$ as our final solution.
In order to use the modified Bregman iterative algorithm, we employ Algorithm \ref{alg:RecTheta}
which is a recursive procedure similar to Algorithm \ref{alg:Rec}.

\begin{algorithm}[htb]
\caption{Recursive procedure for $\theta^{k+1}$}
\label{alg:RecTheta}
\begin{algorithmic}
\STATE {\textbf{Set: }
\begin{align*}
& \theta_{0} \leftarrow \theta^{k}, \\
& \theta_{1} = \mathop{\arg\min}_{\theta} \Bigg\{ \frac{1}{2\eta} \Big\| L_{1}\theta \Big\|_{2}^{2}
+ \frac{1}{2}\Big\| D_{\gamma^{k}}^{-1/2}Q^{k+1}\theta + D_{\gamma^{k}}^{-1/2}v^{k+1} \Big\|_{p^{\theta_{0}}}^{p^{\theta_{0}}} \Bigg\}, \\
& g^{1} =  - L_{1}\theta_{1},
\end{align*}
}
\STATE {\textbf{Repeat: }
\begin{align*}
& \theta_{m+1} = \mathop{\arg\min}_{\theta} \Bigg\{ \frac{1}{2\eta} \Big\| g^{m} - L_{1}\theta \Big\|_{2}^{2}
+ \frac{1}{2}\Big\| D_{\gamma^{k}}^{-1/2}Q^{k+1}\theta + D_{\gamma^{k}}^{-1/2}v^{k+1} \Big\|_{p^{\theta_{m}}}^{p^{\theta_{m}}} \Bigg\}, \\
& g^{m+1} = g^{m} - L_{1}\theta_{m+1},
\end{align*}
}
\STATE {\textbf{Until: } Some stopping conditions is satisfied.}
\end{algorithmic}
\end{algorithm}
As in the previous subsection, we introduce a variable $w := D_{\gamma^{k}}^{-1/2}Q^{k+1}\theta + D_{\gamma^{k}}^{-1/2}v^{k+1}$.
In each step of Algorithm \ref{alg:RecTheta}, we need to calculate out $p^{\theta_{m}}$ according to formula (\ref{21defineP}), then
we should solve a minimization problem as follows
\begin{align}\label{312eachMinPro1}
\begin{split}
& \min_{\theta, w}\Bigg\{ \frac{1}{2\eta}\|g^{m} - L_{1}\theta\|_{2}^{2} + \frac{1}{2}\sum_{\ell \in \Omega_{1}}|w_{\ell}| + \frac{1}{2}\sum_{\ell\in\Omega_{2}}|w_{\ell}|^{2} \Bigg\} \\
& \text{such that }w = D_{\gamma^{k}}^{-1/2}Q^{k+1}\theta + D_{\gamma^{k}}^{-1/2}v^{k+1},
\end{split}
\end{align}
where
\begin{align*}
\Omega_{1} := \Big\{ \ell \, | \, p_{\ell}^{\theta_{m}} = 1, \ell = 0,1,\ldots,n \Big\}, \quad
\Omega_{2} := \Big\{ \ell \, | \, p_{\ell}^{\theta_{m}} = 2, \ell = 0,1,\ldots,n \Big\}.
\end{align*}
Using same notations as in (\ref{311subPro1}) and (\ref{311subPro2}), we have the following ``$w$-subproblem'' for fixed $\theta = \theta^{*}$:
\begin{align}\label{312subPro1}
\mathop{\arg\min}_{w_{1}} \Bigg\{ \frac{1}{2}\sum_{\ell \in \Omega_{1}} |w_{\ell}|
+ \frac{\tilde{\lambda}}{2} \Big\| w_{1} - D_{\gamma^{k}}^{-1/2}Q^{k+1}\theta^{*} - D_{\gamma^{k}}^{-1/2}v^{k+1} \Big\|_{2}^{2} \Bigg\},
\end{align}
and
\begin{align}\label{312subPro2}
\mathop{\arg\min}_{w_{2}} \Bigg\{ \frac{1}{2}\sum_{\ell \in \Omega_{2}} |w_{\ell}|^{2}
+ \frac{\tilde{\lambda}}{2} \Big\| w_{2} - D_{\gamma^{k}}^{-1/2}Q^{k+1}\theta^{*} - D_{\gamma^{k}}^{-1/2}v^{k+1} \Big\|_{2}^{2} \Bigg\}.
\end{align}
The last subproblem is the ``$\theta$-subproblem'' for fixed $w = w^{*}$:
\begin{align}\label{312subPro3}
\mathop{\arg\min}_{\theta} \Bigg\{ \frac{1}{2\eta} \Big\|g^{m} - L_{1}\theta \Big\|_{2}^{2}
+ \frac{\tilde{\lambda}}{2} \Big\| w^{*} - D_{\gamma^{k}}^{-1/2}Q^{k+1}\theta - D_{\gamma^{k}}^{-1/2}v^{k+1} \Big\|_{2}^{2} \Bigg\}.
\end{align}
Similar to the previous Subsection \ref{subsection311}, we can find the minimizers of the subproblems (\ref{312subPro1}), (\ref{312subPro2}) as follows
\begin{align}\label{312subSol1}
\big(w_{1}\big)_{\ell} =
\left\{\begin{aligned}
& 0, \quad \ell \in \Omega_{1}, \\
& 0, \quad \ell \not\in \Omega_{1} \text{ and }\Big|T\theta^{*}_{\ell}\Big| \leq \frac{1}{\tilde{\lambda}}, \\
& \Big( \Big| T\theta^{*}_{\ell} \Big| - \frac{1}{\tilde{\lambda}} \Big)
\frac{T\theta^{*}_{\ell}}{\Big|T\theta^{*}_{\ell}\Big|},  \quad \ell \not\in \Omega_{1} \text{ and } \Big|T\theta^{*}_{\ell}\Big|>\frac{1}{\tilde{\lambda}},
\end{aligned}\right.
\end{align}
\begin{align}\label{312subSol2}
\big(w_{2}\big)_{\ell} =
\left\{\begin{aligned}
& 0, \quad \ell \in \Omega_{2}, \\
& \frac{ T\theta^{*}_{\ell}}{\tilde{\lambda} + 2}, \quad \ell \in \Omega_{2},
\end{aligned}\right.
\end{align}
with $T\theta^{*}_{\ell} := \big(D_{\gamma^{k}}^{-1/2}Q^{k+1}\theta - D_{\gamma^{k}}^{-1/2}v^{k+1}\big)_{\ell}$.
The minimization problem (\ref{312subPro3}) is the least squares solution of the following linear system
\begin{align}\label{312leastSol}
\left(
  \begin{array}{c}
    \sqrt{\tilde{\lambda}}D_{\gamma^{k}}^{-1/2}Q^{k+1} \\
    \frac{1}{\sqrt{\eta}}L_{1} \\
  \end{array}
\right)\theta
=
\left(
  \begin{array}{c}
    \sqrt{\tilde{\lambda}}\Big( w^{*} - D_{\gamma^{k}}^{-1/2}v^{k+1} \Big) \\
    \frac{1}{\sqrt{\eta}}g^{m} \\
  \end{array}
\right).
\end{align}
Now, for the reader's convenience, we present the modified Bregman iterative algorithm in Algorithm \ref{alg:thetaDetail}.

\begin{algorithm}[htb]
\caption{Modified Bregman iterative algorithm for $T_{\theta}(f_{d}, \gamma, \theta)$}
\label{alg:thetaDetail}
\begin{algorithmic}
\REQUIRE  {$\gamma^{k}$, $f_{d}^{k+1}$, $N_{\text{max}}$, $\hat{N}_{\text{max}}$,
$\tilde{\lambda}$}
\STATE {Set: $\theta_{0} = \theta^{k}$, $g^{0} = 0$; Calculate: $D_{\gamma^{k}}$, $L_{\theta^{k}}$}
\WHILE {$m \leq N_{\text{max}}$}
\STATE {$\theta_{m,0} \leftarrow \theta_{m}$,}
\STATE {Calculate $p^{\theta_{m}}$ according to formula (\ref{21defineP}),}
\FOR {$\tilde{m} = 0$ to $\hat{N}_{\text{max}}$}
\STATE {(1) Calculate $w_{1}^{\tilde{m}+1}$ according to formula (\ref{312subSol1}) with $\theta^{*}$ replaced by $\theta_{m,\tilde{m}}$,}
\STATE {(2) Calculate $w_{2}^{\tilde{m}+1}$ according to formula (\ref{312subSol2}) with $\theta^{*}$ replaced by $\theta_{m,\tilde{m}}$,}
\STATE {(3) Calculate $\theta_{m,\tilde{m}+1}$ by solving linear system (\ref{312leastSol}),}
\ENDFOR
\STATE {$\theta_{m+1} \leftarrow \theta_{m,\hat{N}_{\text{max}}}$, $g^{m+1} \leftarrow g^{m} - L_{1}\,\theta_{m+1}$,}
\ENDWHILE
\STATE {Take $(\theta_{N_{\text{max}}})_{j} = 0$ if $(\theta_{N_{\text{max}}})_{j} < 0$, and take
$(\theta_{N_{\text{max}}})_{j} = 1$ if $(\theta_{N_{\text{max}}})_{j} > 1$ for $j = 0,1,\ldots,n$,}
\STATE {$\theta^{k+1} \leftarrow \theta_{N_{\text{max}}}$,}
\ENSURE {$\theta^{k+1}$}
\end{algorithmic}
\end{algorithm}

\subsubsection{Minimization problem for $\gamma$}

Denote
\begin{align*}
F^{k+1} := L_{\theta^{k+1}}f_{d}^{k+1},
\end{align*}
we find that
\begin{align}\label{313shortNo}
\Big\| D_{\gamma}^{-1/2}L_{\theta^{k+1}}f_{d}^{k+1} \Big\|_{p^{\theta^{k+1}}}^{p^{\theta^{k+1}}} =
\sum_{\ell \in \Omega_{1}} \frac{|F_{\ell}^{k+1}|}{\sqrt{\gamma_{\ell}}}
+ \sum_{\ell \in \Omega_{2}} \frac{(F_{\ell}^{k+1})^{2}}{\gamma_{\ell}},
\end{align}
where
\begin{align*}
\Omega_{1} := \Big\{ \ell \, | \, p_{\ell}^{\theta^{k+1}} = 1, \ell = 0,1,\ldots,n \Big\}, \quad
\Omega_{2} := \Big\{ \ell \, | \, p_{\ell}^{\theta^{k+1}} = 2, \ell = 0,1,\ldots,n \Big\}.
\end{align*}
Combing
\begin{align}\label{313shortTgamma}
\begin{split}
T_{\gamma}(f_{d}^{k+1},\gamma,\theta^{k+1}) := & \frac{1}{2}\|D_{\gamma}^{-1/2}L_{\theta^{k+1}}f_{d}^{k+1}\|_{p^{\theta^{k+1}}}^{p^{\theta^{k+1}}}
+ \sum_{\ell = 0}^{n}\left( \frac{\gamma_{\ell}}{\bar{\gamma}} \right)^{r}  \\
& - \left(r\beta - \frac{3}{2}\right)\sum_{\ell = 0}^{n}\text{log}\gamma_{\ell},
\end{split}
\end{align}
and (\ref{313shortNo}), we know that the formulas of the minimizers are different for $\ell \in \Omega_{1}$ and $\ell \in \Omega_{2}$.
Differentiating $T_{\gamma}(f_{d}^{k+1},\gamma,\theta^{k+1})$ with respect to $\gamma_{\ell}$ when $\ell \in \Omega_{1}$, and setting
the derivative equal to zero, we have
\begin{align}\label{313for1}
-\frac{1}{4}\frac{|F_{\ell}^{k+1}|}{\gamma_{\ell}^{3/2}} + \frac{r \gamma_{\ell}^{r-1}}{\bar{\gamma}^{r}}
- \Big(r\beta - \frac{3}{2}\Big)\frac{1}{\gamma_{\ell}} = 0, \quad \gamma_{\ell} > 0.
\end{align}
For $\ell \in \Omega_{1}$, by similar calculations as above, we find the equality
\begin{align}\label{313for2}
-\frac{1}{2}\frac{(F_{\ell}^{k+1})^{2}}{\gamma_{\ell}^{2}} + \frac{r \gamma_{\ell}^{r-1}}{\bar{\gamma}^{r}}
- \Big(r\beta - \frac{3}{2}\Big)\frac{1}{\gamma_{\ell}} = 0, \quad \gamma_{\ell} > 0.
\end{align}
From (\ref{313for1}) and (\ref{313for2}), we can calculate out $\gamma^{k+1}$ numerically.
For special choices of parameters $r$, $\bar{\gamma}$ and $\beta$, we can obtain explicit formulas, but we omit the details here for concisely.

\subsubsection{Stopping criterion}\label{StoppingCriterionSec}

There are many different choices for the stopping criterion.
Firstly, we can choose the iteration stopping value $\tilde{N}^{1}_{\text{max}}$ such that
\begin{align}\label{32stop1}
\| S(d - \mathcal{F}_{mn}(f_{d}^{\tilde{N}^{1}_{\text{max}}})) \|_{2} \leq \tau
\end{align}
is satisfied first time for some specific $\tau > 1$.
This stopping criterion ensures that we will not incorporate noise contaminated in the data into our inverse results \cite{SplitBregman2009SIAM}.
Secondly, based on the relative change of the norm of the unknowns, we can provide the stopping value as follows
\begin{align}\label{32stop2}
\tilde{N}^{2}_{\text{max}} = \min_{k} \left\{ \sqrt{\Delta_{f}^{k} + \Delta_{\gamma}^{k} + \Delta_{\theta}^{k}} \leq \delta \right\},
\end{align}
where $\delta > 0$ is a given tolerance and
\begin{align*}
\Delta_{f}^{k} = \frac{\|f_{d}^{k} - f_{d}^{k-1}\|_{2}^{2}}{\|f_{d}^{k}\|_{2}^{2}}, \quad
\Delta_{\theta}^{k} = \frac{\|\theta^{k} - \theta^{k-1}\|_{2}^{2}}{\|\theta^{k}\|_{2}^{2}},   \quad
\Delta_{\gamma}^{k} = \frac{\|\gamma^{k} - \gamma^{k-1}\|_{2}^{2}}{\|\gamma^{k}\|_{2}^{2}}.
\end{align*}
Considering both stopping criterion, we can take
\begin{align}\label{32stopFinal}
\tilde{N}_{\text{max}} := \min\left\{ \tilde{N}^{1}_{\text{max}}, \, \tilde{N}^{2}_{\text{max}} \right\}
\end{align}
as our maximum iteration number.

\subsection{Theoretical analysis}\label{theoreticalSection}

In this subsection, we provide some preliminary theoretical analysis for the proposed minimization problem (\ref{3DDARmodel2}),
modified Bregman iterative algorithm and the alternate iterative algorithm shown in Algorithm \ref{alg:A}.

\begin{theorem}\label{existenceUnique}
The minimization problem (\ref{3DDARmodel2}) has a solution and the solution is unique if $T(f_{d},\gamma,\theta)$ is strictly convex.
Each one of the minimization problems (\ref{311miniPro}) and (\ref{312diffForm}) has a unique solution.
\end{theorem}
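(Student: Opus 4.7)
The plan is to apply the direct method of the calculus of variations. Since $T(f_d,\gamma,\theta)$ in (\ref{3zong}) is continuous on the open domain $\mathbb{R}^{n+1}\times(0,\infty)^{n+1}\times[0,1]^{n+1}$ (restricted to each of the finitely many constancy regions of the pattern $p^\theta$, since $p^\theta$ depends only piecewise-constantly on $\theta$ through the threshold in (\ref{21defineP})), the task reduces to establishing coercivity and then passing to a convergent subsequence of a minimizing sequence.

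For coercivity of $T$, I would argue term by term. Combining the two logarithmic contributions in (\ref{3zong}) gives an overall factor $(3/2-r\beta)\sum_j\log\gamma_j$; under the natural hyper-parameter assumption $r\beta>3/2$ for the generalized gamma model, this term blows up whenever any $\gamma_j\to 0^{+}$, while the polynomial term $\sum(\gamma_j/\bar\gamma)^{r}$ dominates the logarithm and forces $T\to\infty$ as any $\gamma_j\to\infty$. Hence along a minimizing sequence, $\gamma$ is confined to a compact subset of $(0,\infty)^{n+1}$, so $D_\gamma^{-1/2}$ stays uniformly bounded below. Since $L_\theta=(1-D_\theta)L_p+D_\theta L_q$ is lower triangular with unit diagonal (inherited from $L_p$ and $L_q$) and $\theta\in[0,1]^{n+1}$ is compact, $\|L_\theta^{-1}\|$ is uniformly bounded in $\theta$, so $\|D_\gamma^{-1/2}L_\theta f_d\|_{p^\theta}^{p^\theta}$ is coercive in $f_d$. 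Passing to a subsequence on which $p^\theta$ is eventually a fixed pattern and invoking continuity of the remaining smooth pieces delivers a minimizer; strict convexity of $T$ (when it holds) then forces uniqueness.

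For sub-problem (\ref{311miniPro}), with $\gamma^{k},\theta^{k}$ fixed, the same argument gives coercivity and hence existence. Strict convexity of $T_{f_d}$, and therefore uniqueness, follows from combining the convex data term with the $\ell^{2}$-contribution of the regularizer on indices in $\Omega_2$: under the mild assumption that $S\mathcal{F}_{mn}$ together with the rows of $D_{\gamma^k}^{-1/2}L_{\theta^{k}}$ indexed by $\Omega_{2}$ has trivial joint kernel (which holds, for example, whenever $S\mathcal{F}_{mn}$ is injective or when $\Omega_{2}=\{0,\ldots,n\}$), the Hessian in the smooth directions is positive definite, and the $\ell^{1}$-contribution does not destroy strict convexity. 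For sub-problem (\ref{312diffForm}), existence over the compact set $[0,1]^{n+1}$ is immediate once the pattern $p^\theta$ is fixed, and uniqueness follows because the penalty $\frac{1}{2\eta}\|L_1\theta\|_2^{2}$ is strictly convex: the finite-difference matrix $L_1$ defined in (\ref{21defineL1}) is lower triangular with unit diagonal, hence invertible, so $\theta\mapsto\|L_1\theta\|_2^{2}$ is a strictly convex quadratic, which together with the convex first term of (\ref{fun1}) gives strict convexity.

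The main obstacle I expect is the interplay between the $\ell^{1}/\ell^{2}$ exponent pattern $p^\theta$ and the continuity of $T$: because $p^\theta$ jumps across the threshold $T_s$ in (\ref{21determineTs}), the functional is only piecewise continuous in $\theta$, and both existence and the strict-convexity argument must be organized pattern by pattern and then glued together. A secondary subtlety is quantifying how the logarithmic penalty on $\gamma$ precisely outweighs the potential collapse of $\|D_\gamma^{-1/2}L_\theta f_d\|_{p^\theta}^{p^\theta}$ when $\gamma_j\to 0$ in tandem with $(L_\theta f_d)_j\to 0$, which is where the assumption $r\beta>3/2$ does the work.
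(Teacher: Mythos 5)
Your proof follows essentially the same route as the paper's: the direct method, with compactness of the minimizing sequence in $\gamma$ extracted from the interplay of $\sum_j(\gamma_j/\bar\gamma)^r$ and the logarithmic penalty, compactness in $f_d$ from the unit-lower-triangular structure (hence uniform invertibility) of $L_\theta$, and compactness in $\theta$ from the box constraint, followed by passage to the limit. Where you differ is in the level of care, and your extra hypotheses are not cosmetic. The paper only argues that the polynomial term controls the logarithm for \emph{large} $\gamma$; the lower bound and the confinement $c\le\gamma_j\le C$ as $\gamma_j\to 0^{+}$ genuinely require $r\beta>3/2$ (satisfied by the paper's numerical choice $r=1$, $\beta=2$, but never stated), exactly as you note. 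The paper then invokes ``the continuity of $T$'' to conclude, silently ignoring that $p^\theta$ is only piecewise constant in $\theta$ (and that the threshold $T_s$ itself depends on $\theta$), so $T$ is discontinuous across pattern boundaries; your pattern-by-pattern organization is the right fix, though you should add that a fixed-pattern minimizing subsequence may converge to a point lying in a different pattern region, so one must compare the finitely many pattern-restricted infima (taken over the closures) rather than simply glue, and attainment can in principle fail at a boundary without a further argument. Finally, the paper asserts uniqueness for the subproblems with no proof; your observation that uniqueness for (\ref{311miniPro}) requires the joint kernel of $S\mathcal{F}_{mn}$ and the $\Omega_2$-rows of $D_{\gamma^{k}}^{-1/2}L_{\theta^{k}}$ to be trivial is correct and identifies a hypothesis missing from the theorem as stated, since the $\ell^{1}$ part on $\Omega_1$ alone cannot supply strict convexity.
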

\begin{proof}
Firstly, we should notice that the positive term $\sum_{j=0}^n(\frac{\gamma_j}{\bar{\gamma}})^r$ with $r > 0$
can control the negative term $-(r\beta-\frac{3}{2})\sum_{j=0}^n\text{log}\gamma_j$ for large enough $\gamma$.
Hence, we obviously find that the function $T(\cdot)$ has a lower bound.
Now, we choose a sequence $\{(f_d^m,\gamma^m,\theta^m)\}_{m=1}^\infty$ such that
\begin{align*}
\lim_{m\rightarrow\infty}T(f_d^m,\gamma^m,\theta^m)=\inf_{f_d,\gamma,\theta}T(f_d,\gamma,\theta).
\end{align*}
For the sequence $\gamma^{m}$, from the boundedness of $T(f_d^m,\gamma^m,\theta^m)$, we have
\begin{align*}
-\infty < \sum_{j = 0}^{n}\left( \frac{\gamma_{j}^{m}}{\bar{\gamma}} \right)^{r}
- \left(r\beta - \frac{3}{2}\right)\sum_{j=0}^{n}\text{log}\gamma_{j}^{m} < \infty,
\end{align*}
which means that there exist two constants $c, C > 0$ such that
\begin{align*}
c \leq \gamma_{j}^{m} \leq C, \quad \text{for } \, j = 1,2,\ldots,n.
\end{align*}
Similarly, we can show that every components of $f_{d}^{m}$ and $\theta^{m}$ are uniformly bounded.
Hence, there exist subsequences $\{f_{d}^{m_{k}}, \gamma^{m_{k}}, \theta^{m_{k}}\}$ such that
\begin{align*}
\lim_{k\rightarrow\infty}f_{d}^{m_{k}} = \bar{f}_{d}, \quad
\lim_{k\rightarrow\infty}\gamma^{m_{k}} = \bar{\gamma}, \quad
\lim_{k\rightarrow\infty}\theta^{m_{k}} = \bar{\theta},
\end{align*}
for some vectors $\bar{f}_{d}$, $\bar{\gamma}$ and $\bar{\theta}$.
Finally, using the continuity of $T(\cdot)$, we obtain
\begin{align}\label{jia21}
\lim_{k\rightarrow\infty}T(f_{d}^{m_{k}},\gamma^{m_{k}},\theta^{m_{k}}) = T(\bar{f}_{d},\bar{\gamma},\bar{\theta})
= \inf_{f_d,\gamma,\theta}T(f_d,\gamma,\theta).
\end{align}
The above formula (\ref{jia21}) indicates that the minimization problem (\ref{3DDARmodel2}) has a solution.
Other claims can be demonstrated similarly, so we omit the proof details.
\end{proof}

\begin{theorem}\label{wellSeq2}
For the modified Bregman iterative algorithm shown in Algorithm \ref{alg:B} with fixed $\gamma^{k}$, $\theta^{k}$ and $S$,
the data fitting error from the iteration is non-increasing, i.e.,
\begin{align}\label{nonDe1}
\|S(d - \mathcal{F}_{mn}(f^{m+1}))\|_{2} \leq \|S(d - \mathcal{F}_{mn}(f^{m}))\|_{2}, \quad m = 1,2,\ldots.
\end{align}
Moreover, it follows that
\begin{align}\label{nonDe2}
\|S(d - \mathcal{F}_{mn}(f^{N_{\text{max}}}))\|_{2}^{2} \leq \frac{2}{N_{max}}F(f_{d}^{*}) + \|S(\mathcal{F}_{mn}(f_{d}^{*})-d)\|_{2}^{2},
\end{align}
where $f_{d}^{*}$ is the true solution of our problem and $F(\cdot)$ defined the same as in (\ref{Fis1}).
\end{theorem}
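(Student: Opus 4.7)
My plan is to recover both conclusions from the standard Bregman decay analysis, specialised to the update (\ref{youhuatemp2})--(\ref{gradiNext}). Throughout I will use that the mixed $\ell^{1}$/$\ell^{2}$ functional $F$ in (\ref{Fis1}) is convex and non-negative, so every Bregman distance $D_{F}^{p}(\cdot,\cdot)$ is non-negative, and I will read the algorithm as initialised with $p^{0}=0$ (which is consistent with $f^{1}$ in Algorithm~\ref{alg:Rec} being the plain Tikhonov minimiser).

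For the monotonicity (\ref{nonDe1}) I would just test the variational characterisation (\ref{youhuatemp2}) of $f^{m+1}$ against the previous iterate $f^{m}$. Since $D_{F}^{p^{m}}(f^{m},f^{m})=0$ this yields
\[
D_{F}^{p^{m}}(f^{m+1},f^{m})+\tfrac12\|S(d-\mathcal{F}_{mn}(f^{m+1}))\|_{2}^{2}\leq \tfrac12\|S(d-\mathcal{F}_{mn}(f^{m}))\|_{2}^{2},
\]
and dropping the non-negative Bregman term gives (\ref{nonDe1}).

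For the rate (\ref{nonDe2}) the engine is the Bregman three-point identity
\[
D_{F}^{p^{m+1}}(f,f^{m+1})-D_{F}^{p^{m}}(f,f^{m})+D_{F}^{p^{m}}(f^{m+1},f^{m})=\langle p^{m+1}-p^{m},\,f^{m+1}-f\rangle,
\]
which I apply with $f=f_{d}^{*}$. Substituting the subgradient recursion (\ref{gradiNext}) into the right-hand side and invoking the polarisation $\langle a,b-a\rangle=\tfrac12(\|b\|^{2}-\|a\|^{2}-\|b-a\|^{2})$ with $a=S(\mathcal{F}_{mn}(f^{m+1})-d)$ and $b=S(\mathcal{F}_{mn}(f_{d}^{*})-d)$, I obtain
\[
\tfrac12\|S(d-\mathcal{F}_{mn}(f^{m+1}))\|_{2}^{2}\leq D_{F}^{p^{m}}(f_{d}^{*},f^{m})-D_{F}^{p^{m+1}}(f_{d}^{*},f^{m+1})+\tfrac12\|S(d-\mathcal{F}_{mn}(f_{d}^{*}))\|_{2}^{2}.
\]
Summing $m=0,\dots,N_{\max}-1$ telescopes the Bregman-distance differences, the remaining $D_{F}^{p^{N_{\max}}}(f_{d}^{*},f^{N_{\max}})$ and the $\|S(\mathcal{F}_{mn}(f_{d}^{*})-\mathcal{F}_{mn}(f^{m+1}))\|_{2}^{2}$ terms are non-negative and are dropped, part (\ref{nonDe1}) lets me replace each summand on the left by its minimum $\|S(d-\mathcal{F}_{mn}(f^{N_{\max}}))\|_{2}^{2}$, and $p^{0}=0$ with $F\geq 0$ collapses $D_{F}^{p^{0}}(f_{d}^{*},f^{0})=F(f_{d}^{*})-F(f^{0})\leq F(f_{d}^{*})$. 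Dividing by $N_{\max}/2$ produces exactly (\ref{nonDe2}).

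The main technical hurdle I anticipate is the polarisation step: it tacitly treats $S\circ\mathcal{F}_{mn}$ as linear, so that $\langle p^{m+1}-p^{m},f^{m+1}-f_{d}^{*}\rangle$ becomes an inner product of two images in $\mathbb{R}^{m}$. The appearance of $\mathcal{F}_{mn}^{*}$ in (\ref{gradiNext}) signals that the analysis is written under the linear regime; if $\mathcal{F}_{mn}$ is genuinely nonlinear, one must either interpret $\mathcal{F}_{mn}^{*}$ as the Fréchet-derivative adjoint and carry a linearisation remainder (which the decay would have to absorb via a local Lipschitz or tangential-cone assumption), or restrict the theorem's hypothesis to linear forward maps. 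I would state and prove the result under the linear hypothesis and relegate the nonlinear extension to a remark.
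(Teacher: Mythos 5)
Your argument is correct and follows essentially the same route as the paper's proof: testing the optimality of $f^{m+1}$ in (\ref{youhuatemp2}) against $f^{m}$ for the monotonicity, then the Bregman three-point identity evaluated at $f_{d}^{*}$, the subgradient recursion (\ref{gradiNext}), a Young/polarisation bound, and telescoping with $D_{F}^{p^{0}}(f_{d}^{*},f^{0})\leq F(f_{d}^{*})$. Your observation that the inner-product manipulation tacitly requires $\mathcal{F}_{mn}$ to be linear is accurate -- the paper performs the same identification $\mathcal{F}_{mn}(f^{m}-f_{d})=\mathcal{F}_{mn}(f^{m})-\mathcal{F}_{mn}(f_{d})$ without comment.
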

\begin{proof}
Obviously, we find that
\begin{align}\label{2the1}
\begin{split}
\frac{1}{2}\|S(\mathcal{F}_{mn}(f^{m+1}) - d)\|_{2}^{2} & \leq D_{F}^{p^{m}}(f^{m+1},f^{m}) + \frac{1}{2}\|S(\mathcal{F}_{mn}(f^{m+1}) - d)\|_{2}^{2} \\
& \leq D_{F}^{p^{m}}(f^{m},f^{m}) + \frac{1}{2}\|S(\mathcal{F}_{mn}(f^{m}) - d)\|_{2}^{2} \\
& \leq \frac{1}{2}\|S(\mathcal{F}_{mn}(f^{m}) - d)\|_{2}^{2}.
\end{split}
\end{align}
Through simple calculations, we have
\begin{align*}
D_{F}^{p^{m}}(f_{d},f^{m}) - D_{F}^{p^{m-1}}(f_{d},f^{m-1}) + D_{F}^{p^{m-1}}(f^{m},f^{m-1})
= \langle p^{m} - p^{m-1}, f^{m} - f_{d} \rangle.
\end{align*}
Employing formula (\ref{gradiNext}), we obtain
\begin{align}\label{2the2}
\begin{split}
& \langle f^{m} - f_{d}, p^{m} - p^{m-1} \rangle = \langle f^{m} - f_{d}, -\mathcal{F}_{mn}^{*}S^{T}S(\mathcal{F}_{mn}(f^{m}) - d) \rangle \\
& \quad\quad\quad\quad\quad
= \langle S\mathcal{F}_{mn}(f^{m} - f_{d}), -S(\mathcal{F}_{mn}(f^{m}) - d) \rangle \\
& \quad\quad\quad\quad\quad
= \langle S(\mathcal{F}_{mn}(f_{d})-d), S(\mathcal{F}_{mn}(f^{m})-d) \rangle - \|S(\mathcal{F}_{mn}(f^{m})-d)\|_{2}^{2}  \\
& \quad\quad\quad\quad\quad
\leq \frac{1}{2}\left( \|S(\mathcal{F}_{mn}(f_{d})-d)\|_{2}^{2} - \|S(\mathcal{F}_{mn}(f^{m})-d)\|_{2}^{2} \right)
\end{split}
\end{align}
Take $f_{d} = f_{d}^{*}$ in (\ref{2the2}) and rewrite it as
\begin{align*}
D_{F}^{p^{m}}(f_{d}^{*},f^{m}) & + \frac{1}{2}\|S(\mathcal{F}_{mn}(f^{m})-d)\|_{2}^{2} \\
& \leq D_{F}^{p^{m}}(f_{d}^{*},f^{m}) + D_{F}^{p^{m-1}}(f^{m},f^{m-1}) + \frac{1}{2}\|S(\mathcal{F}_{mn}(f^{m})-d)\|_{2}^{2}  \\
& \leq D_{F}^{p^{m-1}}(f_{d}^{*},f^{m-1}) + \frac{1}{2}\|S(\mathcal{F}_{mn}(f_{d}^{*})-d)\|_{2}^{2}.
\end{align*}
Taking summation for $m = 1,2,\ldots,N_{\text{max}}$ yields
\begin{align}\label{suanFor1}
\begin{split}
D_{F}^{p^{N_{\text{max}}}}(f_{d}^{*},f^{M}) + & \frac{1}{2}\sum_{m = 1}^{N_{\text{max}}} \|S(\mathcal{F}_{mn}(f^{m})-d)\|_{2}^{2} \\
& \leq D_{F}^{p^{0}}(f_{d}^{*},f^{0}) + \frac{1}{2}\sum_{m = 1}^{N_{\text{max}}} \|S(\mathcal{F}_{mn}(f_{d}^{*})-d)\|_{2}^{2}  \\
& \leq F(f_{d}^{*}) + \frac{N_{\text{max}}}{2}\|S(\mathcal{F}_{mn}(f_{d}^{*})-d)\|_{2}^{2}.
\end{split}
\end{align}
Noting that $D_{F}^{p^{N_{\text{max}}}}(f_{d}^{*},f^{N_{\text{max}}}) > 0$, the proof is completed.
\end{proof}


Enlightened by the previous theorem, we can provide the following result for the proposed algorithm
which consists of Algorithm \ref{alg:A}, Algorithm \ref{alg:B}, Algorithm \ref{alg:thetaDetail}, formulas (\ref{313for1}) and (\ref{313for2}).
\begin{theorem}
Algorithm \ref{alg:A} combined with Algorithm \ref{alg:B}, Algorithm \ref{alg:thetaDetail}, formulas (\ref{313for1}) and (\ref{313for2})
generate a sequence $f_{d}^{m}$ for $m = 0,1,\ldots$ satisfying
\begin{align}\label{decreasingAnalysis}
\|S(d - \mathcal{F}_{mn}(f_{d}^{m+1}))\|_{2} \leq \|S(d - \mathcal{F}_{mn}(f_{d}^{m}))\|_{2}.
\end{align}
Moreover, it follows that
\begin{align}\label{decreasingAna2}
\|S(d - \mathcal{F}_{mn}(f_{d}^{\tilde{N}_{\text{max}}}))\|_{2}^{2} \leq \frac{1}{\tilde{N}_{\text{max}}}
\sum_{k = 1}^{\tilde{N}_{\text{max}}}\frac{2}{N_{\text{max}}^{k}}F_{k}(f_{d}^{*}) + \|S(\mathcal{F}_{mn}(f_{d}^{*})-d)\|_{2}^{2},
\end{align}
where $\{N_{\text{max}}^{k}\}_{k = 1,\ldots,\tilde{N}_{\text{max}}}$ represents the maximum iterative number
of each modified Bregman iterative algorithm for solving minimization problem (\ref{311miniPro}) and $F_{k}(f_{d}^{*})$ is
defined as follows
\begin{align}\label{Fis121}
F_{k}(f_{d}^{*}) = \frac{1}{2}\|D_{\gamma^{k}}^{-1/2}L_{\theta^{k}}f_{d}^{*}\|_{p^{\theta^{k}}}^{p^{\theta^{k}}}.
\end{align}
\end{theorem}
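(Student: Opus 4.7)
The plan is to reduce this result directly to Theorem \ref{wellSeq2} applied at each outer iteration of Algorithm \ref{alg:A}. The key observation is that within one outer iteration $k \to k+1$, the substeps (b) and (c) in Algorithm \ref{alg:A} update $\theta^{k+1}$ and $\gamma^{k+1}$ but leave $f_{d}^{k+1}$ untouched. Therefore the residual $\|S(d - \mathcal{F}_{mn}(f_{d}^{k+1}))\|_{2}$ is entirely determined by substep (a), which is executed by Algorithm \ref{alg:B} with frozen hyperparameters $(\gamma^{k},\theta^{k})$ and initial iterate $f^{0} = f_{d}^{k}$.

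For the monotonicity claim \eqref{decreasingAnalysis}, I would invoke the first conclusion of Theorem \ref{wellSeq2}: since Algorithm \ref{alg:B} is initialized at $f^{0}= f_{d}^{k}$ and produces $f_{d}^{k+1}= f^{N_{\text{max}}^{k+1}}$ through a sequence along which the data fitting error is non-increasing, it follows immediately that
\begin{align*}
\|S(d-\mathcal{F}_{mn}(f_{d}^{k+1}))\|_{2} \;\leq\; \|S(d-\mathcal{F}_{mn}(f_{d}^{k}))\|_{2},
\end{align*}
which yields \eqref{decreasingAnalysis}. A small bookkeeping check needs to be done on the very first inner step of Algorithm \ref{alg:B}, since Theorem \ref{wellSeq2} was stated for $m \geq 1$; but the definition of $f^{1}$ as a minimizer together with the choice $f^{0}=f_{d}^{k}$ gives the required one-step bound by comparing the objective value at $f^{1}$ with that at $f^{0}$.

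For the quantitative estimate \eqref{decreasingAna2}, I would combine \eqref{decreasingAnalysis} with the bound \eqref{nonDe2} applied at each outer iteration. By monotonicity, for every $k \in \{1,\ldots,\tilde{N}_{\text{max}}\}$,
\begin{align*}
\|S(d - \mathcal{F}_{mn}(f_{d}^{\tilde{N}_{\text{max}}}))\|_{2}^{2} \;\leq\; \|S(d - \mathcal{F}_{mn}(f_{d}^{k}))\|_{2}^{2},
\end{align*}
so averaging over $k$ gives
\begin{align*}
\|S(d - \mathcal{F}_{mn}(f_{d}^{\tilde{N}_{\text{max}}}))\|_{2}^{2} \;\leq\; \frac{1}{\tilde{N}_{\text{max}}}\sum_{k=1}^{\tilde{N}_{\text{max}}} \|S(d - \mathcal{F}_{mn}(f_{d}^{k}))\|_{2}^{2}.
\end{align*}
Next, for each fixed $k$ I would apply \eqref{nonDe2} to the $k$th call of Algorithm \ref{alg:B}, in which the frozen hyperparameters are $(\gamma^{k-1},\theta^{k-1})$ and the Bregman functional is precisely $F_{k}$ given by \eqref{Fis121}, obtaining
\begin{align*}
\|S(d-\mathcal{F}_{mn}(f_{d}^{k}))\|_{2}^{2} \;\leq\; \frac{2}{N_{\text{max}}^{k}} F_{k}(f_{d}^{*}) + \|S(\mathcal{F}_{mn}(f_{d}^{*})-d)\|_{2}^{2}.
\end{align*}
Summing and dividing by $\tilde{N}_{\text{max}}$ yields exactly \eqref{decreasingAna2}.

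The only real subtlety, and what I would treat as the main technical obstacle, is the notational drift in the statement: $F_{k}$ depends on the hyperparameters used in the $k$th call of Algorithm \ref{alg:B}, and one must consistently identify which iterate of $(\gamma,\theta)$ is active at that call. Beyond this careful indexing, the proof is an essentially mechanical combination of monotonicity (which decouples the $\theta$ and $\gamma$ substeps from the data-fidelity term) and a term-by-term application of Theorem \ref{wellSeq2}.
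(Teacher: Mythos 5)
Your proposal follows the paper's own proof essentially step for step: inequality (\ref{decreasingAnalysis}) is read off from the monotonicity statement of Theorem \ref{wellSeq2} applied to each inner call of Algorithm \ref{alg:B}, and (\ref{decreasingAna2}) is obtained by applying the per-call bound (\ref{nonDe2}) (equivalently, the summed inequality (\ref{suanFor1})) at every outer iteration and then averaging via the outer monotonicity. Your explicit averaging step and your remark about which iterate of $(\gamma,\theta)$ is active in the $k$th call (the indexing of $F_{k}$) are, if anything, more careful than the paper's one-line justification.

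The one place where your sketch does not quite close---and where the paper is equally silent---is the very first inner step $f^{0}=f_{d}^{k}\to f^{1}$. Theorem \ref{wellSeq2} asserts (\ref{nonDe1}) only for $m\geq 1$, and your proposed patch (comparing objective values of the minimizer $f^{1}$ against the competitor $f^{0}$) yields only
$\tfrac{1}{2}\|S(d-\mathcal{F}_{mn}(f^{1}))\|_{2}^{2}+F(f^{1})\leq \tfrac{1}{2}\|S(d-\mathcal{F}_{mn}(f_{d}^{k}))\|_{2}^{2}+F(f_{d}^{k})$,
which controls the residual alone only when $F(f^{1})\geq F(f_{d}^{k})$; equivalently, with $p^{0}=0$ the Bregman distance $D_{F}^{p^{0}}(f^{1},f^{0})=F(f^{1})-F(f^{0})$ need not be nonnegative unless $0\in\partial F(f_{d}^{k})$. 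So the chain from $f_{d}^{k}$ to $f^{1}$ requires either an extra hypothesis (e.g.\ restarting the Bregman sequence at a point where $0$ is a subgradient) or a different argument; since the paper's proof asserts this step ``obviously holds,'' your attempt reproduces the paper's argument together with its one soft spot rather than introducing a new gap.
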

\begin{proof}
Considering the results shown in Theorem \ref{wellSeq2}, we easily know that the modified Bregman iterative algorithm
provides a non-increasing iterative sequence. Hence, the inequality (\ref{decreasingAnalysis}) obviously holds.
For every modified Bregman iterative algorithm similar to Algorithm \ref{alg:B}, we can obtain
an inequality similar to (\ref{suanFor1}) as in the proof of Theorem \ref{wellSeq2}.
Adding all the obtained inequalities together, we can easily deduce estimate (\ref{decreasingAna2}).
\end{proof}

\begin{remark}\label{remarkAnalysis1}
Because the inverse problem is considered from Bayes' perspective, we may view $d$ as a random variable.
The randomness of $d$ are caused by the random noise $\epsilon$ which is distributed as $\textbf{Gaussian}(0,\Sigma)$.
Hence, $S(\mathcal{F}_{mn}(f_{d}^{*})-d)$ is a random variable distributed as $\textbf{Gaussian}(0,I)$.
Taking expectations on both sides of (\ref{nonDe2}) and (\ref{decreasingAna2}), we have
\begin{align}\label{nonDe2E}
\mathbb{E}\|S(d - \mathcal{F}_{mn}(f^{N_{\text{max}}}))\|_{2}^{2} \leq \frac{2}{N_{\text{max}}}F(f_{d}^{*}) + 1,
\end{align}
and
\begin{align}\label{jiajia2}
\mathbb{E}\|S(d - \mathcal{F}_{mn}(f_{d}^{\tilde{N}_{\text{max}}}))\|_{2}^{2} \leq \frac{1}{\tilde{N}_{\text{max}}}
\sum_{k = 1}^{\tilde{N}_{\text{max}}}\frac{2}{N_{\text{max}}^{k}}F_{k}(f_{d}^{*}) + 1.
\end{align}
\end{remark}

\section{Applications to some ill-posed inverse problems}

\subsection{Deconvolution problem}

In this subsection, we consider the deconvolution problems with noisy data
\begin{align}\label{41generalModel}
d_{i} = \int_{0}^{1}K(s_{i} - t)f(t)dt + \epsilon_{i}, \quad i = 0, 1, \ldots, m,
\end{align}
where the convolution kernel could be chosen as the Airy function appearing in optical applications
or the Ricker wavelet appearing in seismic explorations.
For the reader's convenience, we list the Airy function and the Ricker wavelet in the following.

\textbf{Airy function:}
\begin{align}\label{41airyFun}
K(t) = A \cdot \left( \frac{J_{1}(\kappa t)}{\kappa t} \right)^{2},
\end{align}
where $J_{1}(\cdot)$ is the Bessel function of first kind of order $1$, and $\kappa$ is a parameter controlling the width of the function,
and $A$ is a parameter controlling the amplitude of the function.

\textbf{Ricker wavelet:}
\begin{align}\label{41rickerWavelet}
K(t) = (1-2\pi^{2}\mathfrak{f}^{2}t^{2})\exp(-\pi^{2}\mathfrak{f}^{2}t^{2}),
\end{align}
where $\mathfrak{f}$ represents peak frequency.

Through introducing the following matrix
\begin{align}\label{41kernelMatirx}
G_{ij} = \frac{1}{\Delta t}K(s_{i} - t_{j}) = \frac{1}{\Delta t}K(\Delta t (i - j)), \quad \Delta t = 1/n,
\end{align}
with $i = 0,1,\ldots,m$ and $j = 0,1,\ldots,n$ and $m \leq n$ and
denoting
\begin{align*}
f_{d} = \{ f_{0}, f_{1}, \ldots, f_{d} \} = \{ f(t_{0}), f(t_{1}), \ldots, f(t_{n}) \}, \quad t_{j} = \frac{j}{n}\Delta t,
\end{align*}
we know that the operator $\mathcal{F}_{mn}$ mentioned in the previous section can be defined as
\begin{align}\label{41operatorF}
\mathcal{F}_{mn}(f_{d}) = G \cdot f_{d},
\end{align}
where $G = \{G_{ij}\}_{1\leq i \leq m, 1\leq j\leq n}$.
Now, we will compare our method with $L^{2}$-norm based Tikhonov regularization method and Total-Variation regularization method.
Concerning the $L^{2}$-norm based Tikhonov regularization method and the Total-Variation regularization method,
we refer to the following two minimization problems
\begin{align}\label{41TikhonovMethod}
\min_{f_{d}} \|d - \mathcal{F}_{mn}(f_{d})\|_{2}^{2} + \lambda_{\text{Tik}} \|L_{1}f_{d}\|_{2}^{2},
\end{align}
\begin{align}\label{41TVMethod}
\min_{f_{d}} \|d - \mathcal{F}_{mn}(f_{d})\|_{2}^{2} + \lambda_{\text{TV}} \|L_{1}f_{d}\|_{1},
\end{align}
where $\lambda_{\text{Tik}}, \lambda_{\text{TV}}$ are two given constants and the matrix $L_{1}$ is defined by formula (\ref{21defineL1}).

\subsubsection{Recover a smooth function}\label{411smoothSec}

Here, we consider a function $f$ defined as follows
\begin{align}\label{411functionDef}
f(t) := \sin(2\pi t).
\end{align}
In the following, we specify $m = n = 500$, $\lambda_{\text{Tik}} = \lambda_{\text{TV}} = 1$.
For the adaptive augmented regularization method, we choose $p = 2$, $q = 0$, $L_{0} = \text{Id}_{n+1}$ and
\begin{align}\label{411parameter}
L_{2} =
\left(
  \begin{array}{ccccc}
    1 & 0 & 0 & \cdots & 0 \\
    -2 & 1 & 0 & \cdots & 0 \\
    1 & -2 & 1 & \cdots & 0 \\
    \vdots & \ddots & \ddots & \ddots & \vdots \\
    0 & 0 & 0 & \cdots & 1
  \end{array}
\right)_{(n+1) \times (n+1).}
\end{align}
For the models of hyper-parameters $\theta$ and $\gamma$, we take
$\bar{\gamma} = 1$, $r = 1$, $\beta = 2$ in formula (\ref{21beliefgamma1}) and $\eta = 1$ in formula (\ref{21beliefTheta1}).
Concerning the modified Bregman iterative algorithm shown in Algorithm \ref{alg:B} and Algorithm \ref{alg:thetaDetail},
we take $N_{\text{max}} = 20$, $\hat{N}_{\text{max}} = 20$ and $\tilde{\lambda} = 10$.
Finally, we choose $\tau = 1.5$ and $\delta = 10^{-3}$ in Section \ref{StoppingCriterionSec} for the stopping criterion
and the noise $\epsilon_{i} \sim \textbf{Gaussian}(0, \sigma\cdot \max(f_{d}))$ with $\sigma = 0.1$.

We choose the Airy function as the convolution kernel and take $\kappa = 1000, A = 500$ in formula (\ref{41airyFun}).
To give the reader an intuitive idea of the forward convolution operator,
we provide the figure of the original function and the measured data with noise in Figure \ref{origData}.
\begin{figure}[htb]
  \centering
  \includegraphics[width = 1\textwidth]{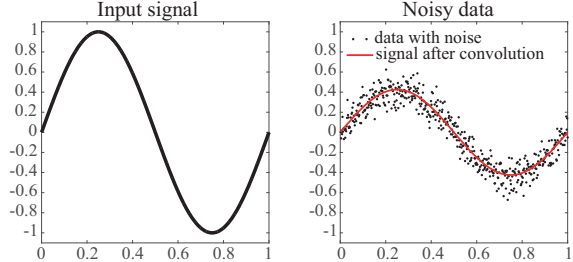}\\
  \caption{The input signal of the computed example (left panel) and the corresponding convolved noisy data (right panel). }\label{origData}
\end{figure}
Under these parameters, we could clearly see the difference between Tikhonov regularization model (\ref{41TikhonovMethod})
and Total-Variation regularization model (\ref{41TVMethod}).
Solving model (\ref{41TikhonovMethod}) and model (\ref{41TVMethod}) by gradient descent algorithm and Bregman iterative algorithm separately,
we can obtain the estimated function which are shown in Figure \ref{TikhonovTVAdaptive}.
\begin{figure}[htb]
  \centering
  \includegraphics[width = 1\textwidth]{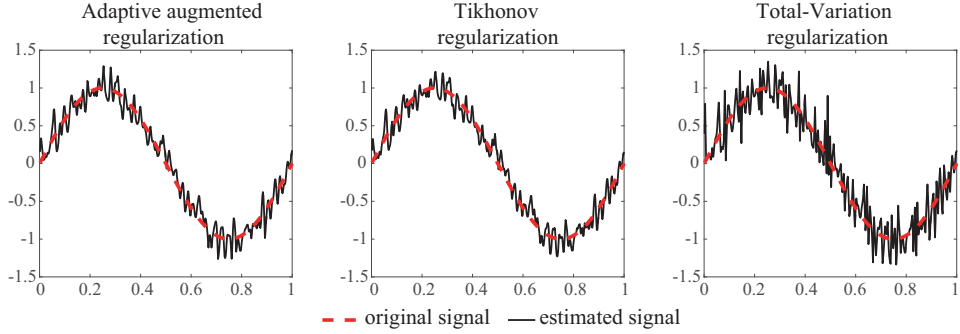}\\
  \caption{The estimated signals obtained by using the AARM (\ref{3DDARmodel1}) (left),
  the Tikhonov regularization model (\ref{41TikhonovMethod}) (middle)
  and the Total-Variation regularization model (\ref{41TVMethod}) (right).
  In each figure, the red curve is the true signal,
  the blue curve is the estimated signal.}\label{TikhonovTVAdaptive}
\end{figure}
From the middle graph and the right graph of Figure \ref{TikhonovTVAdaptive},
we see that the Tikhonov regularization model performs better than the
Total-Variation regularization model under the current settings.
Because the true signal is smooth and without discontinuous points,
the adaptive augmented regularization model proposed in this paper should behaves similar to the Tikhonov regularization model,
which can be clearly seen from the left graph and middle graph in Figure \ref{TikhonovTVAdaptive}.

Here, we should point out that the results shown in Figure \ref{TikhonovTVAdaptive} are actually obtained with not proper regularization
(just take $\lambda_{\text{Tik}} = \lambda_{\text{TV}} = 1$).
For this simple problem, Tikhonov and Total-Variation regularization methods with properly specified regularization parameter $\lambda_{\text{Tik}}$
and $\lambda_{\text{TV}}$ can provide similar recovery results with little difference.
The aim of this example is to show the flexibility of the AARM, so we specify an improper regularization parameter to exaggerate the differences
between the Tikhonov and Total-Variation regularization methods.

In short, this example tells us that the proposed AARM can obtain similar estimates as the Tikhonov regularization model (\ref{41TikhonovMethod})
when the estimated function is smooth.

\subsubsection{Recover a piecewise-constant function}

In this subsection, we consider the following function
\begin{align}\label{412fun}
f(t) :=
\left\{\begin{aligned}
& 0, \quad 0 \leq t < 0.35, \\
& 1, \quad 0.35 \leq t < 0.65, \\
& 0, \quad 0.65 \leq t \leq 1.
\end{aligned}\right.
\end{align}
In order to show the difference visually, we choose the convolution kernel to be the Ricker wavelet with the frequency $\mathfrak{f} = 50$,
take $\sigma = 0.0005$ as the noise level, and all the other parameters are chosen to be the same as in the previous Subsection \ref{411smoothSec}.
Now, we show the original signal and the noisy data in Figure \ref{origData2} and the estimated signals by
AARM, Tikhonov regularization model and Total-Variation regularization model in Figure \ref{2TikhonovTVAdaptive}.
\begin{figure}[htb]
  \centering
  \includegraphics[width = 1\textwidth]{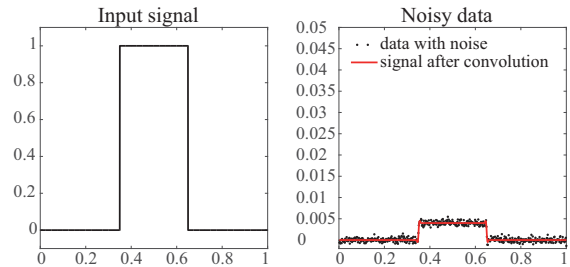}\\
  \caption{The input signal of the computed example (left panel) and the corresponding convolved noisy data (right panel). }\label{origData2}
\end{figure}
\begin{figure}[htb]
  \centering
  \includegraphics[width = 1\textwidth]{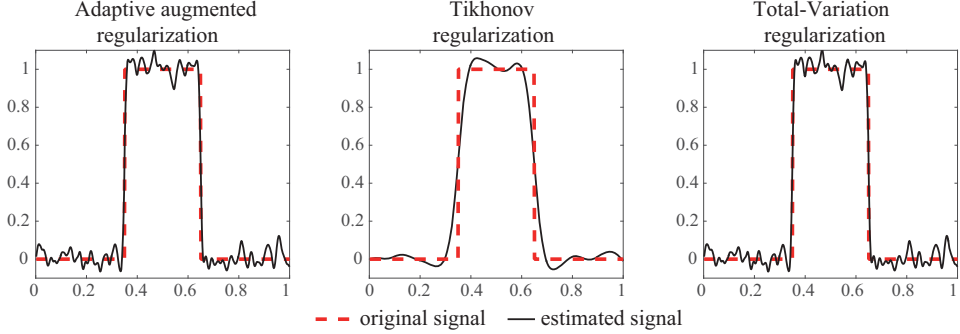}\\
  \caption{The estimated signals obtained by using the AARM (\ref{3DDARmodel1}) (left),
  the Tikhonov regularization model (\ref{41TikhonovMethod}) (middle)
  and the Total-Variation regularization model (\ref{41TVMethod}) (right).
  In each figure, the red curve is the true signal,
  the blue curve is the estimated signal.}\label{2TikhonovTVAdaptive}
\end{figure}
For a piecewise-constant function, the Total-Variation regularization model performs better than the Tikhonov regularization model
which is illustrated in many papers \cite{Osher2005An,RUDIN1992259}.
Since our model incorporate the spatially adaptive mechanism enlightened by the Bayesian inverse framework,
we expect that the AARM should behave like Total-Variation regularization model which can capture the discontinuous changes of a function.
Actually, from the left graph and the right graph in Figure \ref{2TikhonovTVAdaptive},
we find that the AARM generates a similar estimated function as the Total-Variation regularization model
which illustrate the effectiveness of the algorithm proposed in Section \ref{MAPSection}.

\subsubsection{Recover a function with smooth parts and piecewise-constant parts}

In the previous two subsections, for the whole function, we illustrate that the AARM can adjust between Tikhonov regularization model
and Total-Variation regularization model according to the measured data.
However, from the construct procedures of the AARM, we know that this model can characterize local properties of a function
which can adjust its parameters $p^{\theta}$, $\theta$, $\gamma$ automatically at each discrete point.
This adjust procedure makes the AARM performs like Tikhonov regularization model at the smooth parts of the estimated function and
performs as Total-Variation regularization model at the piecewise-constant parts of the estimated function.
Hence, we consider a function defined as follows
\begin{align}\label{413fun}
f(t) :=
\left\{\begin{aligned}
& 0, \quad 0 \leq t < 0.1, \\
& 1, \quad 0.1 \leq t < 0.2,     \\
& 0, \quad 0.2 \leq t < 0.3, \\
& 0.5\cdot \sin(10\pi (t-0.3)), \quad 0.3 \leq t < 0.7,   \\
& 0.3\cdot \sin(100\pi (t-0.5)), \quad 0.7 \leq t \leq 1.
\end{aligned}\right.
\end{align}
This function consists of three parts: for $0 \leq t \leq 0.3$, it is a piecewise-constant function;
for $0.3 \leq t \leq 0.7$, it is a sin function with low frequency;
for $0.7 \leq t \leq 1$, it is a sin function with high frequency.
We choose Airy function with $\kappa = 1000, A = 500$ as the convolution kernel and $\sigma = 0.02$ as the noise level.
All the other parameters are chosen as in Subsection \ref{411smoothSec}.
In order to give the reader an intuitive idea of the function defined in (\ref{413fun}), we show the function and the noisy data in Figure \ref{origData3}.
\begin{figure}[htb]
  \centering
  \includegraphics[width = 1\textwidth]{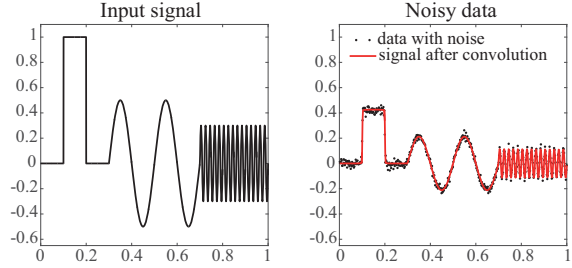}\\
  \caption{The input signal of the computed example (left panel) and the corresponding convolved noisy data (right panel). }\label{origData3}
\end{figure}

Since the function (\ref{413fun}) contains three different parts, we can not visually see the difference of the estimated function
for the Tikhonov regularization model,
the Total-Variation regularization model and the AARM in one figure.
Hence, we only show the recovered function by using the AARM and the original function in the left panel of Figure \ref{RecoveredTheta}
to illustrate that the characteristics of each part of the original function can be captured by our method.
\begin{figure}[htb]
  \centering
  \includegraphics[width = 1\textwidth]{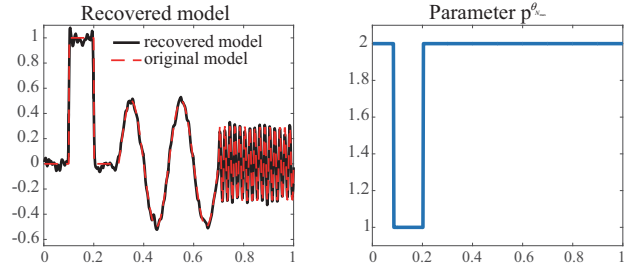}\\
  \caption{The estimated signal of the computed example (left panel) and the parameter $p^{\theta_{\tilde{N}_{\text{max}}}}$ (right panel). }\label{RecoveredTheta}
\end{figure}
In the right panel of Figure \ref{RecoveredTheta}, we provide the value of the parameter $p^{\theta_{\tilde{N}_{\text{max}}}}$
which indicate that the value of $p^{\theta}$ changed according to the measured data efficiently.

At last, we compare the estimated functions provided by the AARM, Tikhonov regularization model and the TV regularization model
in Figure \ref{CompareDDAPComplex}.
\begin{figure}[htb]
  \centering
  \includegraphics[width = 1\textwidth]{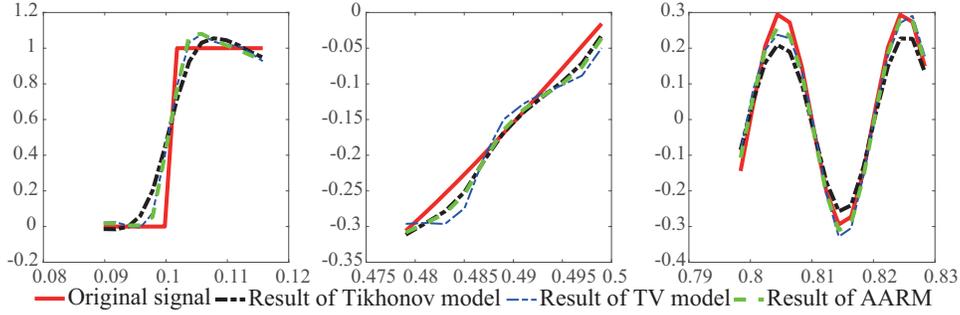}\\
  \caption{Different parts of the original signal and the estimated signal obtained by different methods.
  Red curve represents the original signal.
  Black curve, blue curve and green curve represent the estimated signals obtained by
  Tikhonov regularization model, TV regularization model and AARM separately.
  The left panel shows the result of the discontinuous part, the middle panel shows the result of the continuous part with low frequency,
  and the right panel shows the result of the continuous part with high frequency.
  }\label{CompareDDAPComplex}
\end{figure}
The left panel of Figure \ref{CompareDDAPComplex} shows that the AARM can obtain a similar result as the TV regularization model
when the function is discontinuous in this local region. From this figure, we clearly see that
the estimated function in this local region obtained by AARM or TV model is more likely
to capture sharp changes than the Tikhonov regularization model.
In the middle panel, we show the results in the region of smooth part with low frequency.
Obviously, the AARM provides a function with small oscillation compared with the TV regularization model.
This means that the AARM behaves like Tikhonov regularization model in this local region.
The right panel tells us that the function obtained by the AARM is a little bit better than the function obtained by
the TV regularization model when the function changes rapidly.
Clearly, in general, the AARM and the TV regularization model can provide a better result than the Tikhonov regularization model (\ref{41TikhonovMethod})
in this local region. Actually, in this local region, a Tikhonov regularization model with less smooth constraint than (\ref{41TikhonovMethod}) may provide a better
recovery result since function is highly oscillating in this region (not piecewise constant or slowly varying).
According to the right panel of Figure \ref{RecoveredTheta}, AARM just behaves like a Tikhonov regularization model
with less smooth constraint compared with model (\ref{41TikhonovMethod}).

In summary, the AARM proposed in this paper can adjust its parameters according to the measured data.
This characteristic ensures that the AARM can always provide a good result in each region of
a function with different properties.
The AARM has the capability to characterize local properties of a function rather than provide an average description.

\section{Conclusion}

In this paper, through several hyper-parameters, we construct a prior probability distribution which has the capability
to generate functions with complex behavior. Based on the new prior probability distribution, Bayes' formula are given and
the MAP estimate are also provided. Bearing the connections of Bayesian inverse method and regularization method in mind,
we propose a new regularization model named as the adaptive augmented regularization model which has the ability to
alter its form between various regularization models at each discrete point according to the noisy data.
At last, we construct an alternate iterative algorithm by proposing a modified Bregman iterative algorithm.
The effectiveness of this algorithm has been illustrated through some numerical examples on deconvolution problems.

This work is only a beginning and there are a lot of further interesting problems deserved to be investigated, e.g.,
generalize the AARM to high-dimensional functions, construct more efficient algorithms.


\section*{Acknowledgments}
This work was partially supported by the National Natural Science Foundation of China under the grants no. 11871392, 11501439, 11771347, 11131006 and 41390450.


\bibliographystyle{plain}
\bibliography{references}

\begin{thebibliography}{10}

\bibitem{Bao2013RandomSource}
G.~Bao, S.-N. Chow, P.~Li, and H.~Zhou.
\newblock An inverse random source problem for the $\text{Helmholtz}$ equation.
\newblock {\em Mathematics of Computation}, 83(285):215--233, 2013.

\bibitem{Blomgren1999Variational}
P.~Blomgren, T.~F. Chan, P.~Mulet, L.~Vese, and W.~L. Wan.
\newblock Variational $\text{PDE}$ models and methods for image processing.
\newblock {\em Numerical Analysis Chapman \& Hall/crc Boca}, pages 43--67,
  1999.

\bibitem{Calvetti2014Variable}
D.~Calvetti, E.~Somersalo, and R.~Spies.
\newblock Variable order smoothness priors for ill-posed inverse problems.
\newblock {\em Mathematics of Computation}, 84(294):1753--1773, 2014.

\bibitem{Chong2011An}
Edwin K.~P. Chong and S.~H. Zak.
\newblock {\em An Introduction to Optimization}.
\newblock Wiley-Interscience, $\text{Third}$ edition, 1996.

\bibitem{MAP_detail}
M.~Dashti, K.~J.~H. Law, A.~M. Stuart, and J.~Voss.
\newblock $\text{MAP}$ estimators and their consistency in $\text{Bayesian}$
  nonparametric inverse problems.
\newblock {\em Inverse Problems}, 29(9):095017, 2013.

\bibitem{Engl2000Regularization}
H.~W. Engl and R.~Ramlau.
\newblock {\em Regularization of Inverse Problems}.
\newblock Kluwer Academic Publishers, 2000.

\bibitem{Goldstein2009The}
T.~Goldstein and S.~Osher.
\newblock {\em The Split Bregman Method for $L^{1}$-Regularized Problems}.
\newblock SIAM: Society for Industrial and Applied Mathematics, 2009.

\bibitem{SplitBregman2009SIAM}
T.~Goldstein and S.~Osher.
\newblock The split $\text{Bregman}$ method for $l^{1}$-regularized problems.
\newblock {\em SIAM Journal on Imaging Sciences}, 2(2):323--343, 2009.

\bibitem{HARJULEHTO201356}
P.~Harjulehto, P.~H\"{a}st\"{o}, V.~Latvala, and O.~Toivanen.
\newblock Critical variable exponent functionals in image restoration.
\newblock {\em Applied Mathematics Letters}, 26(1):56--60, 2013.

\bibitem{HARJULEHTO2008174}
P.~Harjulehto, H\"{a}st\"{o} P., and V.~Latvala.
\newblock Minimizers of the variable exponent, non-uniformly convex
  $\text{Dirichlet}$ energy.
\newblock {\em Journal de Math\'{e}matiques Pures et Appliqu\'{e}es},
  89(2):174--197, 2008.

\bibitem{MAP_Besov}
T.~Helin and M.~Burger.
\newblock Maximum a posteriori probability estimates in infinite-dimensional
  $\text{Bayesian}$ inverse problems.
\newblock {\em Inverse Problems}, 31(8):085009, 2015.

\bibitem{Junxiong2016variable}
J.~Jia, J.~Peng, and J.~Gao.
\newblock Bayesian approach to inverse problems for functions with a
  variable-index $\text{Besov}$ prior.
\newblock {\em Inverse Problems}, 32(8):085006, 2016.

\bibitem{Jin2009IP}
B.~Jin and J.~Zou.
\newblock Augmented {T}ikhonov regularization.
\newblock {\em Inverse Problems}, 25(2):025001, 2009.

\bibitem{Jin2010JCP}
B.~Jin and J.~Zou.
\newblock Hierarchical {B}ayesian inference for ill-posed problems via
  variational method.
\newblock {\em Journal of Computational Physics}, 229(19):7317--7343, 2010.

\bibitem{DisBayesian1}
J.~Kaipio and E.~Somersalo.
\newblock {\em Statistical and Computational Inverse Problems}, volume 160 of
  {\em Applied Mathematical Sciences}.
\newblock Springer Science \& Business Media, Berlin, 2005.

\bibitem{Kindermann2005Deblurring}
S.~Kindermann, S.~Osher, and P.~W. Jones.
\newblock Deblurring and denoising of images by nonlocal functionals.
\newblock {\em SIAM Journal on Multiscale Modeling and Simulation},
  4(4):1091--1115, 2005.

\bibitem{LI2010870}
F.~Li, Z.~Li, and L.~Pi.
\newblock Variable exponent functionals in image restoration.
\newblock {\em Applied Mathematics and Computation}, 216(3):870--882, 2010.

\bibitem{M2015Full}
L.~M\'{e}tivier, F.~Bretaudeau, R.~Brossier, S.~Operto, and J.~Virieux.
\newblock Full waveform inversion and the truncated $\text{Newton}$ method:
  quantitative imaging of complex subsurface structures.
\newblock {\em Geophysical Prospecting}, 62(6):1353--1375, 2015.

\bibitem{Osher2005An}
S.~Osher, M.~Burger, D.~Goldfarb, J.~Xu, and W.~Yin.
\newblock An iterative regularization method for total variation-based image
  restoration.
\newblock {\em SIAM Journal on Multiscale Modeling and Simulation},
  4(2):460--489, 2005.

\bibitem{RUDIN1992259}
L.~I. Rudin, S.~Osher, and Fatemi E.
\newblock Nonlinear total variation based noise removal algorithms.
\newblock {\em Physica D: Nonlinear Phenomena}, 60(1):259--268, 1992.

\bibitem{Shin2005Statistical}
J.~W. Shin, J.~H. Chang, and N.~S. Kim.
\newblock Statistical modeling of speech signals based on generalized
  $\text{Gamma}$ distribution.
\newblock {\em IEEE Signal Processing Letters}, 12(3):258--261, 2005.

\bibitem{Tarantola2005book}
A.~Tarantola.
\newblock {\em Inverse Problem Theory and Methods for Model Parameter
  Estimation}.
\newblock SIAM: Society for Industrial and Applied Mathematics, 2004.

\end{thebibliography}

\end{document}